\documentclass[12pt, reqno]{amsart}

\usepackage{mathrsfs}
\usepackage{amsfonts}
\usepackage[centertags]{amsmath}
\usepackage{amssymb,array}
\usepackage{amsthm}
\usepackage{stmaryrd}
\usepackage{graphicx}
\usepackage{caption}
\usepackage{ytableau}
\usepackage{enumerate,enumitem}
\usepackage{MnSymbol}
\SetLabelAlign{center}{\hfill #1\hfill}
\usepackage[textwidth=16cm, hmarginratio=1:1]{geometry}
\usepackage{tikz-cd, tikz}
\usepackage{rotating}
\usepackage[all,cmtip]{xy}
\usepackage[titletoc, title]{appendix}
\usepackage{amscd}
\usepackage[colorlinks]{hyperref}
\usepackage{float}
\usepackage{mathtools}
\usepackage{footnote}
\usepackage{comment}
\usepackage{dsfont} 
\hypersetup{
bookmarksnumbered,
pdfstartview={FitH},
breaklinks=true,
linkcolor=blue,
urlcolor=blue,
citecolor=blue,
bookmarksdepth=2
}

\setcounter{tocdepth}{4}
\setcounter{secnumdepth}{4}

\theoremstyle{plain}
   
   \newtheorem{theorem}{Theorem}[section]

   \newtheorem{lemma}[theorem]{Lemma}
   
   \newtheorem{conjecture}[theorem]{Conjecture}
   
\theoremstyle{definition}
   \newtheorem{definition}[theorem]{Definition}
   
   \newtheorem{example}[theorem]{Example}

   \newtheorem{remark}[theorem]{Remark}

\numberwithin{equation}{section}

\newcommand{\CC}{{\mathbb {C}}}

\newcommand{\RR}{{\mathbb {R}}}
\newcommand{\ZZ}{{\mathbb {Z}}}

\newcommand{\ch}{{\operatorname{ch}}}

\newcommand{\SSYT}{{\rm SSYT}}

\DeclareMathOperator{\conv}{Conv}

\DeclareMathOperator{\Gr}{\mathrm{Gr}}
\DeclareMathOperator{\GL}{\mathrm{GL}}
\DeclareMathOperator{\Trop}{\mathrm{Trop}}
\DeclareMathOperator{\TGr}{\mathrm{TropGr}}

\DeclareMathOperator*{\wt}{wt}

\DeclareMathOperator{\Dr}{Dr}
\DeclareMathOperator{\TropGr}{TropGr}

\newcommand{\M}{\mathsf{M}}
\newcommand{\U}{\mathsf{U}}

\newcommand\scalemath[2]{\scalebox{#1}{\mbox{\ensuremath{\displaystyle #2}}}}

\newlength{\mysizetiny}
\setlength{\mysizetiny}{0.3em}
\newlength{\mysizesmall}
\setlength{\mysizesmall}{0.8em}
\newlength{\mysize}
\setlength{\mysize}{1.3em}
\newlength{\mysizelarge}
\setlength{\mysizelarge}{2em}

\begin{document}

\title{From dual canonical bases to positroidal subdivisions}
\author{Jian-Rong Li and Ayush Kumar Tewari}
\address{Jian-Rong Li: Faculty of Mathematics, University of Vienna, Oskar-Morgenstern Platz 1, 1090 Vienna, Austria}
\email{lijr07@gmail.com}
\address{Ayush Kumar Tewari: Johann Radon Institute for Computational and Applied Mathematics (RICAM)
\\Austrian Academy of Sciences, Linz, Austria}
\email{ayushkumar.tewari@ricam.oeaw.ac.at}
\date{}

\begin{abstract}
The Grassmannian cluster algebra $\mathbb{C}[\Gr(k, n)]$ admits a distinguished basis known as the dual canonical basis, whose elements correspond to rectangular semi-standard Young tableaux with $k$ rows and with entries in $[n]$. We establish that each such tableau induces a positroidal subdivision of the hypersimplex $\Delta(k,n)$ via a map introduced by Speyer and Williams. For $\Gr(2,n)$, we prove that non-frozen prime tableaux correspond precisely to the coarsest positroidal subdivisions of $\Delta(2,n)$. Furthermore, we present computational evidence extending these results to $k>2$. In the process, we formulate a conjectural formula for the number of split positroidal subdivisions of $\Delta(k,n)$ for any $k \ge 2$ and explore the deep connections between the polyhedral combinatorics of $\Delta(k,n)$ and the dual canonical basis of $\mathbb{C}[\Gr(k, n)]$.
\end{abstract}

\subjclass[2020]{52B11, 52B40, 05E10, 14M15, 13F60}
\keywords{Grassmannian cluster algebra; dual canonical basis; hypersimplex; positroidal subdivision; matroid polytope; semi-standard Young tableau; tropical Grassmannian; split subdivisions}

\maketitle

\section{Introduction} 

The hypersimplex $\Delta(k,n)$ is a classical polytope arising in various topics like algebraic combinatorics, discrete geometry, and matroid theory. It was notably studied by Gelfand, Goresky, MacPherson, and Serganova in their work on the combinatorial geometry of matroids \cite{gelfand1987combinatorial}. It is defined as the convex hull of all $0$--$1$ vectors in $\mathbb{R}^n$ with exactly $k$ entries equal to $1$. Subdivisions of $\Delta(k,n)$ respecting matroidal structure appear naturally in the study of tropical varieties and tropical prevarieties associated to the Grassmanian $\Gr(k,n)$ \cite{speyer2004tropical, speyer2008tropical}. This poset structure is captured geometrically by a polyhedral fan called the \emph{secondary fan} \cite{gelfand2009discriminants}, which encodes all regular subdivisions of the polytope. For matroidal subdivisions of $\Delta(k,n)$, the polyhedral fan encoding this poset is referred as the \emph{Dressian} $\Dr(k,n)$ \cite{speyer2008tropical}. 

The homogeneous coordinate ring $\mathbb{C}[\Gr(k,n)]$ admits a cluster algebra structure \cite{Sco06}. Canonical and dual canonical bases are important constructions in the representation theory of quantum groups by Lusztig \cite{Lus90} and Kashiwara \cite{Kas91}. The dual canonical basis of $\mathbb{C}[\Gr(k,n)]$ is a distinguished basis for $\mathbb{C}[\Gr(k,n)]$. In the Grassmannian setting, elements of this basis can be indexed by rectangular semi-standard Young tableaux with $k$ rows and entries in $[n]$, see \cite{CDFL}.

Speyer and Williams \cite{SW2005tropical} introduced a piecewise-linear map from the positive tropical Grassmannian to a subfan of the secondary fan of $\Delta(k,n)$, thereby linking tropical geometry with polyhedral subdivisions. We leverage this map to associate to each dual canonical basis tableau a positroidal subdivision of $\Delta(k,n)$, revealing a connection between dual canonical bases and polyhedral combinatorics, see Theorem \ref{thm:main_thm}.

We call a tableau prime if the corresponding dual canonical basis element in $\CC[\Gr(k,n)]$ is prime, that is, it cannot be written as the product of two dual canonical basis elements. Our primary result concerns $\Gr(2,n)$: we prove that non-frozen prime tableaux correspond precisely to the coarsest positroidal subdivisions of $\Delta(2,n)$, i.e., those subdivisions not admitting any nontrivial refinement. 
For $k \ge 3$, there are tableaux which are prime but the corresponding positroidal subdivisions of $\Delta(k,n)$ are not coarsest. We conjecture that one side of the result holds: if the positroidal subdivision of $\Delta(k,n)$ corresponding to a tableau $T$ is coarsest, then $T$ is a prime tableau. A related conjecture is Conjecture 6.1 in \cite{EL2024} in the case of ${\bf N}_{k,n}^{(1)}$ where it is conjectured that the facets of the Newton polytope ${\bf N}_{k,n}^{(1)}$ correspond to prime modules of the quantum affine algebra $U_q(\widehat{\mathfrak{sl}_k})$. 

A positroidal subdivision of $\Delta(k,n)$ is called a split if it has exactly two maximal cells \cite{herrmannsplitting}.
Finally, we propose a conjectural enumeration formula for the number of split positroidal subdivisions of $\Delta(k,n)$ for general $k \ge 2$: the number of split positroidal subdivision of $\Delta(k,n)$ is 
\begin{align*}
\frac{k-1}{2} n(n - k - 1),    
\end{align*}
see Conjecture \ref{conjecture:number of split positroidal subdivision}. Our results suggest deep interplay between positroidal subdivisions and the dual canonical basis.

\medskip

\noindent\textbf{Organization of the paper.}
In Section~\ref{sec:Matroidal and positroidal subdivisions}, we review hypersimplices, matroidal and positroidal subdivisions, and tropical Grassmannians. In Section \ref{sec:Dual canonical bases and semistandard Young tableaux}, we recall the relation between the dual canonical bases of Grassmannian cluster algebras and semistandard tableaux. In Section \ref{sec:From dual canonical bases to positroidal subdivisions}, we prove that every element in the dual canonical basis of a Grassmannian cluster algebra induces a positroidal subdivision of the hypersimplex $\Delta(k,n)$ under the map introduced by Speyer and Williams. In Section \ref{sec:Split positroidal subdivisions}, we study split positroidal subdivisions. 

\subsection*{Acknowledgements}

JRL is supported by the Austrian Science Fund (FWF): P-34602, Grant DOI: 10.55776/P34602, and PAT 9039323, Grant DOI 10.55776/PAT9039323.


\section{Hypersimplices, matroidal and positroidal subdivisions, Tropical Grassmannians} \label{sec:Matroidal and positroidal subdivisions}
In this section, we review hypersimplices, matroidal and positroidal subdivisions, and tropical Grassmannians. 

\subsection{Hypersimplices, matroidal and positroidal subdivisions}

A \emph{matroid} of rank $k$ on the ground set $E$ is a nonempty collection $\M \subseteq \binom{|E|}{k}$ of $k$-element subsets of $E$, called \emph{bases} of $\M$, that satisfies the basis exchange axiom: 
For any $I , J \in \M$ and $a \in I$, there exists $b \in J$ such that $I \setminus \{ a \} \cup \{ b \} \in \M$.

A matroid is called \emph{representable} if it can be represented by columns of a matrix over some field $\mathbb{K}$. We index the columns of a $k \times n$ matrix by the set $[n]$. A \emph{positroid} $\mathsf{P}$ of rank $k$ is a matroid $\M$ that can be represented over $\mathbb{R}$ by a $k \times n$-matrix $A$ such that the maximal minor $p_I$ is non-negative for each $I \in \binom{[n]}{k}$. We refer the reader to the book by Oxley \cite{oxley2006matroid} for further details on matroids.  

A matroid intrinsically defines a polytope as follows \cite{edmonds2003submodular}.

\begin{definition}
Let $M$ be a matroid on a ground set $E$. Then the \emph{matroid polytope} $P_{M}$ is defined as 
\[ P_{M} = \conv\{e_{B} : B \text{ is a basis of } M \}  \]

where $\{e_{i}: i \in E \}$ is a basis of $\mathbb{R}^{E}$ and $e_{B} = e_{b_{1}} + \hdots + e_{b_{r}} $ for $B = \{b_{1} , \hdots , b_{r} \}$.
\end{definition}

Matroid polytopes can be characterized geometrically as follows \cite{gelfand1987combinatorial},

\begin{theorem}
A collection $\mathcal{B}$ of subsets of $[n]$ is the set of bases of a matroid if and only if every edge of the polytope
\[ P_{B} := \conv\{e_{B} : B \in \mathcal{B} \} \subseteq \mathbb{R}^{n}\]
is a translate of $e_{i} - e_{j}$ for some $i, j$.   
\end{theorem}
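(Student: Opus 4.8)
The plan is to prove the two implications separately; in each direction the main polyhedral input is that the directions of the edges incident to a vertex of a polytope generate its tangent (feasible-direction) cone at that vertex.

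\emph{If $\mathcal{B}$ is a matroid basis set, every edge direction is $e_i-e_j$.} Let $M$ be a matroid of rank $k$ with basis set $\mathcal{B}$, and let $F=[e_B,e_{B'}]$ be an edge of $P_{\mathcal B}$; I want $|B\triangle B'|=2$. If not, then $|B\setminus B'|=|B'\setminus B|\ge 2$; choose $a\in B\setminus B'$. By the symmetric basis exchange property (a standard strengthening of the stated exchange axiom; see \cite{oxley2006matroid}) there is $b\in B'\setminus B$ with $B_1:=(B\setminus\{a\})\cup\{b\}$ and $B_2:=(B'\setminus\{b\})\cup\{a\}$ both in $\mathcal{B}$. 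Then $e_{B_1}+e_{B_2}=e_B+e_{B'}$, so $F$ and the segment $[e_{B_1},e_{B_2}]$ share a midpoint. Pick a linear functional $\ell$ maximized over $P_{\mathcal B}$ exactly on $F$. Since $e_{B_1},e_{B_2}\in P_{\mathcal B}$ and the average of $\ell(e_{B_1})$ and $\ell(e_{B_2})$ equals the value of $\ell$ at that common midpoint, which lies on $F$ and is therefore $\max_{P_{\mathcal B}}\ell$, we must have $\ell(e_{B_1})=\ell(e_{B_2})=\max_{P_{\mathcal B}}\ell$, hence $e_{B_1},e_{B_2}\in F$. But any vertex of $P_{\mathcal B}$ lying on the edge $F$ is an endpoint of $F$, so $B_1\in\{B,B'\}$ — impossible, since $b\in B_1\setminus B$ and, choosing $a'\in(B\setminus B')\setminus\{a\}$, we have $a'\in B_1\setminus B'$. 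So $|B\triangle B'|=2$ and $e_{B'}-e_B=e_j-e_i$ for suitable $i,j$.

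\emph{If every edge direction is $e_i-e_j$, then $\mathcal{B}$ is a matroid basis set.} The $1$-skeleton of a polytope is connected, and translation by $e_i-e_j$ does not change the coordinate sum; since $e_B$ has coordinate sum $|B|$, all members of $\mathcal{B}$ have a common size $k$. For exchange, take $I,J\in\mathcal{B}$ and $a\in I$; if $a\in I\cap J$ take $b=a$, so assume $a\in I\setminus J$. Let $H$ be the face of $P_{\mathcal B}$ where $\sum_{i\in I}x_i+\sum_{j\in J}x_j$ is maximal. A short computation shows this maximum is $k+|I\cap J|$, attained exactly at those $e_B$ with $I\cap J\subseteq B\subseteq I\cup J$; in particular $e_I,e_J\in H$. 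The functional $x_a$ equals $1$ at $e_I$ and $0$ at $e_J$, so $e_I$ is not $x_a$-minimal on $H$; since the edge directions of $H$ at $e_I$ generate its tangent cone there, some edge of $H$ at $e_I$ strictly decreases $x_a$. By hypothesis this edge runs to a vertex $e_B$ with $e_B-e_I=e_q-e_p$; strict decrease of $x_a$ forces $p=a$, so $B=(I\setminus\{a\})\cup\{q\}$ with $q\notin I$, and $q\in B\subseteq I\cup J$ gives $q\in J$. Thus $b:=q$ satisfies $(I\setminus\{a\})\cup\{b\}\in\mathcal{B}$. Being nonempty with all members of size $k$ and satisfying exchange, $\mathcal{B}$ is the basis set of a matroid of rank $k$.

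\emph{Main obstacle.} The midpoint/functional manipulations are routine; the substance is in the second implication, precisely in passing from ``$e_I$ is not $x_a$-minimal on $H$'' to the existence of an \emph{edge} at $e_I$ decreasing $x_a$ and reading off its direction. This needs the polyhedral fact that the tangent cone at a vertex is spanned by the incident edge directions, plus the bookkeeping that pins the vertices of $H$ between $I\cap J$ and $I\cup J$ — which is exactly what guarantees that the improving step is a single swap $a\leftrightarrow q$ with $q\in J$. The one genuinely external ingredient is the symmetric exchange property invoked in the first implication; it can be bypassed via the rank-inequality description of $P_M$, but at the cost of length.
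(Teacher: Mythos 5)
Your proof is correct. Note that the paper does not actually prove this statement---it quotes it from \cite{gelfand1987combinatorial} without argument---so there is no in-paper proof to compare against; what you have written is essentially the standard GGMS argument. Both directions check out: in the first, the identity $e_{B_1}+e_{B_2}=e_B+e_{B'}$ together with a supporting functional for the edge correctly forces $e_{B_1},e_{B_2}$ onto the edge and hence into $\{e_B,e_{B'}\}$, yielding the contradiction (and you are right that this needs the \emph{symmetric} exchange property rather than the axiom as stated in the paper; citing \cite{oxley2006matroid} for that is appropriate). In the second, the reduction to the face $H$ cut out by $\sum_{i\in I}x_i+\sum_{j\in J}x_j$, whose vertices are exactly the $e_B$ with $I\cap J\subseteq B\subseteq I\cup J$, plus the fact that the tangent cone at a vertex is generated by incident edge directions, correctly produces the single swap $a\leftrightarrow q$ with $q\in J$. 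The only cosmetic point worth making explicit is that every $e_B$ is a vertex of $P_{\mathcal B}$ (being a $0$--$1$ point, hence a vertex of the cube containing $P_{\mathcal B}$), which you use tacitly when identifying vertices of $F$ and of $H$ with elements of $\mathcal B$.
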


The \emph{hypersimplex} $\Delta(k,n)$ is defined as intersection of the unit cube $[0,1]^{n}$ with the hyperplane $\sum_{i \in [n]}x_{i} = k$. When we consider the matroid $M$ to be the \emph{uniform matroid} $\U_{k,n}$, then the corresponding matroid polytope is the hypersimplex, i.e., $P_{\U_{k,n}} = \Delta(k,n)$. 

\begin{definition}
For a polytope  $P \subseteq  \mathbb{R}^{d}$, a \emph{(polytopal) subdivision} $\Sigma = \bigg\{ P_1, P_2, \ldots, P_m\bigg\}$ is a collection of polytopes such that all of the following conditions are satisfied.
\begin{enumerate}
    \item The intersection $P_{ij} := P_i \cap P_j$, which can possibly be empty, is a face of both $P_i$ and $P_j$ for all $1 \leq i < j \leq m$. 
    \item The collection $\Sigma$ covers $P$ i.e. the union $\bigcup_{i=1}^m P_i = P$.
    \item For each $1 \leq i \leq m$, the vertex set $\mathrm{vert}(P_i) \subseteq \mathrm{vert}(P)$.
\end{enumerate}
The elements $P_i$ are referred as \emph{cells} of $\Sigma$. A polytopal subdivision is called a \emph{triangulation} if all $P_i$ are simplices. A polytopal subdivision is called \emph{matroidal} if all $P_i$ are matroid polytopes. Similarly, a polytopal subdivision is called  \emph{positroidal} if all $P_i$ are matroid polytopes of positroids.   
\end{definition}  

A subdivision $\Sigma$ of a polytope $P$ in $\mathbb{R}^{n}$ is said to be \emph{regular} if there exists a weight vector $w$ such that if the vertices of $P$ are lifted to heights provided by $w$ in $\mathbb{R}^{n+1}$ and subsequently the lower convex hull is projected back to $\mathbb{R}^{n}$, then the subdivision $\Sigma$ is retrieved. 

A \emph{split} subdivision is a subdivision with exactly two maximal cells \cite{herrmannsplitting}. It also known that any split subdivision is always regular \cite[Lemma 3.5]{herrmannsplitting}. Two splits $S_{1}$ and $S_{2}$ are said to be \emph{compatible} if the corresponding split hyperplanes do not intersect in the interior of the polytope. It is known that any two splits of the hypersimplex $\Delta(k,n)$ are compatible \cite[Corollary 5.6]{herrmannsplitting}.

We also recall the notions of polyhedral complexes and polyhedral fans.

\begin{definition}
A set $\mathcal{P}$ of polyhedra is called a \emph{polyhedral complex} if 
\begin{enumerate}
    \item $P \in \mathcal{P}$ and $F$ is a face of $P$, then it implies that $F \in \mathcal{P}$.
    \item The intersection of any two polyhedra  $P,Q \in \mathcal{P}$ is a face of both $P$ and $Q$.
\end{enumerate}
A \emph{polyhedral fan} in ${\mathbb{R}}^{n}$ is a polyhedral complex consisting of polyhedral cones.
\end{definition}

We refer the reader to \cite{ziegler2012lectures, de2010triangulations} for further details about polytopal combinatorics and polyhedral complexes.

\subsection{Tropical Grassmannians} 

The \emph{Grassmannian} $\Gr(k,n)$ is the smooth projective variety corresponding to the vanishing set of the \emph{Pl\"ucker ideal} $\mathcal{I}_{k,n}$. An element $V$ in the Grassmannian $\Gr(k,n)$ is a collection of $n$ vectors $v_{1}, \hdots, v_{n} \in \mathbb{K}^{k}$ spanning the space $\mathbb{K}^{k}$ modulo the  action of $\GL(k,n)$ on the vectors. Let $A$ be a $k \times n$-matrix consisting of column vectors $v_1, v_2, \ldots, v_n$ and $A$ is called the \emph{representative matrix} of $V$. This defines a matroid $\M_V$ whose bases are the $k$-subsets $I \subseteq [n]$ such that $\det_{I}(A) \neq 0$ where, $\det_{I}(A)$ denotes the determinant of the $k \times k$ submatrix of $A$ with the column set $I \in \binom{[n]}{k}$.

An element $V \in \Gr(k,n)$ is termed as \emph{totally non-negative} if it has a representative $k \times n$ matrix $A$ such that $\det_{I}(A) \geq 0$, for all $I \in \binom{[n]}{k}$. The set of all totally non-negative $V \in \Gr(k,n)$ is called the \emph{totally non-negative Grassmannian} $\Gr^{\geq 0}(k,n)$ and  $\Gr^{> 0}(k,n)$ is called the \emph{positive Grassmannian}.

Tropical geometry is the study of vanishing of polynomials over the tropical semiring $\mathbb{T} = \{ \mathbb{R} \cup \{ \infty \}, \text{max}, + \}$, where the locus of vanishing is the set of points where the maxima is achieved twice. Let $\mathcal{C}$ and $\mathcal{R}$ represent the Puiseux series over field of complex numbers and real numbers respectively and $\text{val}$ be the \emph{valuation} map. Let $\mathcal{R}^{+}$ represent those elements of $\mathcal{R}$ where the coefficient of the unique lowest term is real and positive. The tropicalisation of the variety $V(I)$ for and ideal $I \subset \mathcal{C}[x_{1}, \hdots, x_{n}]$ is defined as 

$$\Trop\bigl(V(I)\bigr)
=\overline{\bigl\{{\rm val}(x)\;\bigm|\;x\in V(I)\cap(\mathcal{C}^*)^n\bigr\}}. $$

Similarly, the \emph{positive part} of a tropical variety $\text{Trop}(V(I))$ is defined as 

$$\Trop^{+}\bigl(V(I)\bigr)
=\overline{\bigl\{{\rm val}(x)\;\bigm|\;x\in V(I)\cap(\mathcal{R}^{+})^n\bigr\}}.$$

The \emph{tropical Grassmannian} $\TGr(k,n)$ is the intersection of the tropical hypersurfaces $\Trop(f)$, where $f$ ranges over all elements of the \emph{Pl\"ucker ideal} $\mathcal{I}_{k,n}$ which is generated by the \emph{quadratic Pl\"ucker relations} \cite{maclagan2021introduction}. The \emph{Dressian} $\Dr(k,n)$ is the intersection of the tropical hypersurfaces $\Trop(f)$, where $f$ ranges over all three-term Pl\"ucker relations, that generate the \emph{Pl\"ucker ideal}. Similarly, the \emph{positive tropical Grassmannian} is the positive part of the tropical variety $\Trop^{+}\Gr(k,n)$ which is obtained as the intersection of the positive part of all tropical hypersurfaces $\Trop^{+}(f)$, where $f$ ranges over all elements of the Pl\"ucker ideal. The \emph{positive Dressian} $\Dr^{+}(k,n)$ is the intersection of the positive part of all tropical hypersurfaces $\Trop^{+}(f)$,
where $f$ ranges over all three-term Pl\"ucker relations. The underlying matroid for the definitions of the tropical Grassmannian and Dressian is the \emph{uniform matroid} $\U_{k,n}$ \cite{SW2005tropical}.

The Dressian $\Dr(k,n)$ enjoys an intricate relationship with the hypersimplex $\Delta(k,n)$ which can be described as follows,

\begin{theorem}[{\cite[Proposition 2.2]{speyer2008tropical}}]
A vector $w$ lies in the Dressian $\Dr(k,n)$ if and only if it induces a matroidal subdivision of the hypersimplex $\Delta(k,n)$.    
\end{theorem}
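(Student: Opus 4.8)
The plan is to use the Gelfand--Goresky--MacPherson--Serganova edge criterion quoted above \cite{gelfand1987combinatorial} --- that a polytope whose vertices are vertices of $\Delta(k,n)$ is the polytope of a matroid if and only if each of its edges is a translate of some $e_i-e_j$ --- together with a reduction to ``octahedral'' faces. By that criterion, $\Sigma_w$ is matroidal if and only if no (maximal) cell has a \emph{non-root} edge, i.e.\ an edge $[e_I,e_J]$ with $|I\setminus J|\ge 2$. For $S\in\binom{[n]}{k-2}$ and distinct $i,j,k,l\notin S$, let $O_{S;ijkl}$ be the face of $\Delta(k,n)$ cut out by fixing the coordinates in $S$ to $1$ and those outside $S\cup\{i,j,k,l\}$ to $0$; it is affinely an octahedron $\Delta(2,4)$ with vertices $e_{S\cup\{x,y\}}$ for $\{x,y\}\subseteq\{i,j,k,l\}$, and $\Sigma_w|_{O_{S;ijkl}}$ is the regular subdivision it induces. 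The base case is the octahedron lemma: a regular subdivision of $\Delta(2,4)$ is matroidal if and only if it is trivial or a split, if and only if the three-term tropical Pl\"ucker relation holds for its lifting, i.e.\ $\min\{w_{S\cup ij}+w_{S\cup kl},\ w_{S\cup ik}+w_{S\cup jl},\ w_{S\cup il}+w_{S\cup jk}\}$ is attained at least twice. Indeed the only non-root edges of the octahedron are its three long diagonals, so a proper matroidal subdivision must be cut by an equatorial square into two square pyramids, and a short computation matches the trivial subdivision with ``all three sums equal'' and each split with ``exactly two sums equal and minimal''.

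For the implication ``$\Sigma_w$ matroidal $\Rightarrow w\in\Dr(k,n)$'': for any $S,i,j,k,l$ the cells of $\Sigma_w|_{O_{S;ijkl}}$ are the polytopes $P\cap O_{S;ijkl}$ for $P$ a cell of $\Sigma_w$, and since $O_{S;ijkl}$ is a face of $\Delta(k,n)\supseteq P$ each such intersection is a face of $P$, hence a matroid polytope. Thus $\Sigma_w|_{O_{S;ijkl}}$ is matroidal, and the octahedron lemma gives the three-term relation at $(S;i,j,k,l)$; as this holds for all $S,i,j,k,l$ we conclude $w\in\Dr(k,n)$.

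For the converse, assume $w\in\Dr(k,n)$. Non-root edges $[e_I,e_J]$ with $|I\setminus J|=2$ are excluded directly: putting $S=I\cap J$ and $Q=(I\setminus J)\cup(J\setminus I)$, the face $P\cap O_{S;Q}$ of $P$ is a cell of $\Sigma_w|_{O_{S;Q}}$ one of whose edges is the long diagonal $[e_I,e_J]$ of that octahedron, so by the octahedron lemma the three-term relation at $(S;Q)$ fails --- a contradiction. More systematically, a deletion--contraction induction on $n$ handles every edge with $I\cap J\ne\emptyset$ or $I\cup J\ne[n]$: restricting $w$ to the $k$-subsets containing (resp.\ avoiding) a fixed index $i$ gives a vector in $\Dr(k-1,n-1)$ (resp.\ $\Dr(k,n-1)$) inducing $\Sigma_w\cap\{x_i=1\}$ (resp.\ $\Sigma_w\cap\{x_i=0\}$), matroidal by induction. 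What remains --- and the step I expect to be the main obstacle --- is to exclude non-root edges $[e_I,e_J]$ with $|I\setminus J|\ge 3$; equivalently, after that reduction, ``diameter'' edges $[e_I,e_{[n]\setminus I}]$ of cells of $\Sigma_w$ when $n=2k$. Such an edge lies on no proper face of $\Delta(k,n)$, hence is invisible to all octahedral (indeed all face) restrictions, so a genuinely global argument is needed. I would supply it from the theory of valuated matroids: the three-term tropical Pl\"ucker relations on the full (uniform) support already imply the full valuated basis-exchange axiom (the Dress--Wenzel theorem), and then the vertex set of a cell of $\Sigma_w$ --- a set of the form $\{I: w_I-\langle\rho,e_I\rangle\text{ minimal}\}$ for a linear functional $\rho$, with $w-\langle\rho,\cdot\rangle$ still satisfying all three-term relations since the common vertex-sum cancels from all three terms --- is a matroid by the one-line argument: if $v(I)=v(J)=\min$ and $a\in I\setminus J$, valuated exchange yields $b\in J\setminus I$ with $v(I)+v(J)\ge v(I-a+b)+v(J-b+a)\ge 2\min$, forcing $v(I-a+b)=\min$, which is basis exchange. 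So the real content is exactly the passage from the three-term relations to the valuated matroid axiom.
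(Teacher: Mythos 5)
The paper does not actually prove this statement: it is imported verbatim from Speyer \cite[Prop.\ 2.2]{speyer2008tropical}, so there is no internal proof to compare against. Your argument is, in substance, the standard proof from the literature (GGMS edge criterion, restriction to octahedral faces $O_{S;ijkl}$, and the valuated-matroid exchange computation on the minimizing set), and I find it correct as written. Your octahedron lemma and the direction ``$\Sigma_w$ matroidal $\Rightarrow w\in\Dr(k,n)$'' are complete and elementary: $P\cap O_{S;ijkl}$ is indeed a face of $P$, hence a matroid polytope, and the long-diagonal analysis of $\Delta(2,4)$ correctly matches ``minimum attained at least twice'' with ``trivial or split.'' (Only a convention to reconcile: the paper's tropical semiring is max-plus, so state the relations with max, or flip signs once.)

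Two remarks on the converse. First, it is structurally redundant: once you invoke Dress--Wenzel together with the observation that a cell of $\Sigma_w$ has vertex set $\{e_I: v_I \text{ minimal}\}$ for $v=w-\langle\rho,\cdot\rangle$, which still satisfies the three-term relations because $e_I+e_J=e_{I-a+b}+e_{J-b+a}$, the exchange computation shows \emph{every} cell is a matroid polytope. The preceding edge analysis, the deletion--contraction induction, and the reduction to diameter edges of $\Delta(k,2k)$ then do no work in the proof; they are a correct diagnosis of why a purely face-local argument cannot close the gap, but you should either delete them or present them only as motivation. Second, be aware of how much weight the citation carries: the passage from the three-term relations to the full valuated basis-exchange axiom (Dress--Wenzel for full support; see also Maclagan--Sturmfels, Thm.\ 4.4.5, where this equivalence is proved directly) is essentially the entire content of this implication. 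Citing it is legitimate, but the result then reads as a reduction to a theorem of comparable depth rather than a self-contained proof, so the citation should be made explicit and precise rather than parenthetical.
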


An analogous result is also true when restricted to the positive Dressian \cite{speyer2021positive, arkani2021positive}.

\begin{theorem}
A vector $w$ lies in the Dressian $\Dr^{+}(k,n)$ if and only if it induces a positroidal subdivision of the hypersimplex $\Delta(k,n)$.    
\end{theorem}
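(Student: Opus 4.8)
The plan is to derive the positive statement from the matroidal one (the previous theorem) by localising the positive three-term Pl\"ucker relations at octahedral faces of $\Delta(k,n)$. Since $\Dr^{+}(k,n)\subseteq\Dr(k,n)$ and every positroidal subdivision is in particular matroidal, the previous theorem already shows that $\Sigma_w$ is a matroidal subdivision of $\Delta(k,n)$ whenever either side of the asserted equivalence holds; so only the additional content is at stake: a matroidal subdivision $\Sigma_w$ has all cells positroidal exactly when, for every $(k-2)$-subset $S\subseteq[n]$ and all $a<b<c<d$ in $[n]\setminus S$,
\[
w_{Sac}+w_{Sbd}=\min\!\big(w_{Sab}+w_{Scd},\; w_{Sad}+w_{Sbc}\big).
\]
The key tool is the dictionary between these relations and the face $F_{S,abcd}=\{x\in\Delta(k,n):x_m=1\ (m\in S),\ x_m=0\ (m\notin S\cup\{a,b,c,d\})\}$, which is a copy of the octahedron $\Delta(2,4)$ with vertices $e_S+e_{ij}$ for $\{i,j\}\subseteq\{a,b,c,d\}$, and on whose vertices $w$ restricts to the tuple of heights appearing in the corresponding three-term relation.

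First I would establish this dictionary. The octahedron $\Delta(2,4)$ admits exactly four matroidal subdivisions: the trivial one, and the three splits, one across each equatorial square. A short computation with the lower hull of the lift of $w|_{F_{S,abcd}}$ shows that the split separating $e_S+e_{ac}$ from $e_S+e_{bd}$ --- the one attached to the middle monomial $p_{Sac}p_{Sbd}$ --- occurs precisely when $w_{Sab}+w_{Scd}=w_{Sad}+w_{Sbc}<w_{Sac}+w_{Sbd}$, i.e.\ precisely when the displayed relation fails, while the trivial subdivision and the other two splits correspond to the relation holding. Moreover, among these four subdivisions exactly this one has non-positroidal cells: its two cells are the matroid polytopes obtained from $\U_{2,4}$ by deleting the basis $\{a,c\}$, respectively $\{b,d\}$, neither of which is a cyclic interval, whereas the remaining three subdivisions only involve cells coming from $\U_{2,4}$ or from deleting a cyclic-interval basis, all of which are positroids.

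Granting the dictionary, the implication that a positroidal $\Sigma_w$ yields $w\in\Dr^{+}(k,n)$ is short. Fix $(S;a,b,c,d)$. The restriction $\{P\cap F_{S,abcd}:P\in\Sigma_w\}$ is a polytopal subdivision of $F_{S,abcd}$, and each cell $P\cap F_{S,abcd}$ is a face of the positroid polytope $P$, hence the matroid polytope of a minor of a positroid with the cyclic order inherited from $[n]$, hence again a positroid polytope. So this restriction is one of the three positroidal subdivisions of $\Delta(2,4)$ and never the split separating $e_S+e_{ac}$ from $e_S+e_{bd}$; by the dictionary, the displayed relation for $(S;a,b,c,d)$ holds. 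As this is true for all such data, $w\in\Dr^{+}(k,n)$.

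For the converse --- that $w\in\Dr^{+}(k,n)$ implies $\Sigma_w$ is positroidal --- it is enough to show that every maximal cell of $\Sigma_w$ is the matroid polytope of a positroid. The matroid of each cell of the regular subdivision $\Sigma_w$ is determined by $w$, and I would combine this with the Ardila--Rinc\'on--Williams characterisation of positroid polytopes as exactly the matroid polytopes inside $\Delta(k,n)$ cut out by the cyclic-interval rank inequalities: the goal is to show that the positive three-term relations force the cyclic-interval ranks of each cell matroid to be such that its matroid polytope equals its cyclic-interval envelope, so that the cell is a positroid. I expect this to be the main obstacle. The elementary octahedral bookkeeping and the restriction-to-faces argument above are routine, but the genuine content of the theorem is precisely this passage from the local octahedral conditions to the global statement that every cell is a positroid --- explaining why non-positroidality of a cell cannot persist once every octahedral restriction of the ambient subdivision is positroidal; a complete argument along these lines, or alternatively via the equality $\Dr^{+}(k,n)=\Trop^{+}\Gr(k,n)$, is carried out in \cite{speyer2021positive, arkani2021positive}.
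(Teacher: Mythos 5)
First, a point of comparison: the paper does not actually prove this statement. It is imported verbatim from \cite{speyer2021positive, arkani2021positive}, so there is no internal proof to measure your attempt against; the relevant benchmark is the argument in those references. Your octahedral dictionary is correct: on a face $F_{S,abcd}\cong\Delta(2,4)$ the split separating $e_S+e_{ac}$ from $e_S+e_{bd}$ is induced by $w$ exactly when $w_{Sab}+w_{Scd}=w_{Sad}+w_{Sbc}<w_{Sac}+w_{Sbd}$ (the lifts of the four ``equatorial'' vertices are coplanar and the other two lie strictly above), and its two cells, namely $\U_{2,4}$ with the basis $\{a,c\}$ (resp.\ $\{b,d\}$) deleted, are precisely the non-positroidal matroids among the cells of the four matroidal subdivisions of the octahedron, since a rank-two positroid cannot have two parallel elements that are not cyclically adjacent. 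Combined with the fact that the restriction of a positroid polytope to a face $F_{S,abcd}$ of $\Delta(k,n)$ is again a positroid polytope for the induced cyclic order (Ardila--Rinc\'on--Williams), this gives a complete and correct proof of the implication ``$\Sigma_w$ positroidal $\Rightarrow$ $w\in\Dr^{+}(k,n)$''.

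The converse, however, is a genuine gap, and you acknowledge it. Knowing that every octahedral restriction of $\Sigma_w$ avoids the bad split does not by itself force every maximal cell of $\Sigma_w$ to be a positroid: the characterisation you invoke requires verifying that each cell matroid's polytope is cut out by cyclic-interval rank inequalities, and the passage from the three-dimensional local data to these global rank conditions is exactly the substance of the theorem. Your sketch states this goal but does not carry out any step of it; deferring to ``a complete argument \ldots is carried out in \cite{speyer2021positive, arkani2021positive}'' at precisely this point means the hard half of the equivalence is cited rather than proved. In those references this direction is established either by an inductive analysis of the faces of the cells via the non-crossing-partition characterisation of positroids, or by first proving $\Dr^{+}(k,n)=\Trop^{+}\Gr(k,n)$ and then realising the cells through the positroid stratification of the nonnegative Grassmannian. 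As written, your proposal fully proves one implication and correctly frames, but does not close, the other.
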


It is known for $k=2$, $\TGr(2,n) = \Dr(2,n)$ \cite{speyer2004tropical} as polyhedral fans, but for ($k \geq 3$ and $n \geq 6$),  $\TGr(k,n) \neq  \Dr(k,n)$ \cite{herrmann2009draw}. However, when restricted to just the positive part, we have the following result \cite{speyer2021positive,arkani2021positive}.

\begin{theorem}\label{thm:pos_trop_Grass_equals_pos_Dress}
The positive tropical Grassmannian $\Trop^{+}\Gr(k,n)$ equals the positive
Dressian $\Dr^{+}(k,n)$.
\end{theorem}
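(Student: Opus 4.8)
The inclusion $\Trop^{+}\Gr(k,n)\subseteq\Dr^{+}(k,n)$ is immediate, and the whole content of the theorem lies in the reverse one. Indeed, $\Dr^{+}(k,n)$ is cut out by imposing $\Trop^{+}(f)$ only for the three-term Plücker relations $f$, while $\Trop^{+}\Gr(k,n)$ imposes $\Trop^{+}(f)$ for every $f\in\mathcal{I}_{k,n}$; since imposing fewer conditions can only enlarge the set, $\Trop^{+}\Gr(k,n)\subseteq\Dr^{+}(k,n)$. So the plan is to prove $\Dr^{+}(k,n)\subseteq\Trop^{+}\Gr(k,n)$, i.e.\ that a weight vector satisfying all positive-tropical three-term Plücker relations already satisfies the positive tropicalization of every Plücker relation.

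The approach I would take is \emph{realizability over the Puiseux field}. Fix $w\in\Dr^{+}(k,n)$. By the positive counterpart of Speyer's matroid-subdivision theorem stated above, $w$ induces a regular positroidal subdivision $\mathcal{D}_{w}$ of $\Delta(k,n)$. It then suffices to produce a $k\times n$ matrix $A$ over $\mathcal{R}$ whose maximal minors $p_{I}(A)$ all lie in $\mathcal{R}^{+}$ and satisfy $\operatorname{val}\bigl(p_{I}(A)\bigr)=w_{I}$ for every $I\in\binom{[n]}{k}$: then $w$ is the coordinatewise valuation of the Plücker vector of a point of the positive Grassmannian over $\mathcal{R}$, hence $w\in\Trop^{+}\Gr(k,n)$ by the very definition of the positive part of a tropical variety.

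To construct $A$, I would work cell by cell. Each maximal cell $P$ of $\mathcal{D}_{w}$ is the matroid polytope of a positroid $M_{P}$, and, since the subdivision induced on $P$ is trivial, the lift $w$ restricted to the vertices $\{e_{B}:B\in M_{P}\}$ is the restriction of an affine function, so $w_{B}=\sum_{i\in B}c^{P}_{i}$ on $M_{P}$ for some $c^{P}\in\mathbb{R}^{n}$. By definition of a positroid, $M_{P}$ admits a totally nonnegative real realization; rescaling its columns by the powers $t^{\,c^{P}_{i}}$ yields a local positive Puiseux realization of $M_{P}$ whose minors have the prescribed valuations on that cell. The remaining, and decisive, task is to glue these local realizations across the shared faces of $\mathcal{D}_{w}$ into a single matrix $A$, using the regular lift $w$ to dictate the gluing, and to check that when a minor $p_{I}(A)$ collects contributions from several cells there is no cancellation of leading terms. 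Total positivity is precisely what rules this out — all monomials in the Plücker expansion of a positively parametrized positroid carry the same sign — and the convexity of the lift $w$ is what makes the heights globally consistent; orchestrating this simultaneously over the whole subdivision is the crux.

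An equivalent, more combinatorial route would dispense with matrices and argue directly that the positive-tropical three-term relations propagate to all positive-tropical Plücker relations, by induction on $n$ via deletion and contraction together with the face structure of positroid polytopes (a positroidal subdivision of $\Delta(k,n)$ restricts to a positroidal subdivision on each facet, itself a smaller hypersimplex). I expect the main obstacle in either approach to be the same: controlling signs and valuations \emph{globally} so that no accidental cancellation destroys positivity, whereas the single-cell statements are comparatively routine given the definition of a positroid and the positive parametrizations of positroid cells. The complete argument is carried out in \cite{speyer2021positive,arkani2021positive}.
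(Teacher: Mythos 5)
The paper does not actually prove this statement: it is imported verbatim from the literature, with \cite{speyer2021positive,arkani2021positive} cited as the source, so there is no in-paper argument to measure your proposal against. Judged on its own terms, your write-up correctly disposes of the easy inclusion $\Trop^{+}\Gr(k,n)\subseteq\Dr^{+}(k,n)$ (both sides are intersections of the sets $\Trop^{+}(f)$, the left-hand side over a larger family of $f$), and it correctly identifies realizability over $\mathcal{R}$ as a viable strategy for the converse. But it is a strategy outline, not a proof: the step you yourself label ``the crux'' --- gluing the cell-by-cell positive realizations of the positroids $M_{P}$ into a single $k\times n$ matrix $A$ over $\mathcal{R}$ with $\operatorname{val}(p_{I}(A))=w_{I}$ for \emph{every} $I$ --- is exactly where all the difficulty lives, and nothing in the proposal addresses it. Concretely: each local realization is only defined up to row operations and column scaling, the shared face of two adjacent cells does not by itself pin down a common representative, and the assertion that total positivity prevents cancellation of leading terms is made for a matrix $A$ that has not been constructed. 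Deferring this to \cite{speyer2021positive,arkani2021positive} at the end concedes that the argument is incomplete.

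It is also worth noting that the published proofs do not proceed by patching local realizations. Speyer and Williams work with the global positive parametrization of $\Gr^{>0}(k,n)$ by the web/plabic-network coordinates of \cite{SW2005tropical}: tropicalizing that parametrization gives a piecewise-linear surjection onto $\Trop^{+}\Gr(k,n)$, and the content of the theorem is that its image already fills out $\Dr^{+}(k,n)$, which they establish by analyzing the positroidal subdivision attached to a positive Dressian point (in particular via the characterization of finest positroidal subdivisions). Arkani-Hamed, Lam and Spradlin argue closer to your ``more combinatorial route,'' propagating the three-term positive tropical Pl\"ucker relations to all of them. Either way, the global consistency you flag as the obstacle is resolved by having a single global parametrization from the start rather than by gluing; if you want to complete your sketch, that is the idea you are missing.
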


Theorem \ref{thm:pos_trop_Grass_equals_pos_Dress} is a powerful result and allows us to consider the positive tropical Grassmannian $\Trop^{+}\Gr(k,n)$ and the  positive
Dressian $\Dr^{+}(k,n)$ interchangeably. 

\section{Dual canonical bases and semistandard Young tableaux} \label{sec:Dual canonical bases and semistandard Young tableaux}
In this section, we recall dual canonical bases \cite{Lus90} and the relations between the dual canonical bases of Grassmannian cluster algebras with semistandard tableaux \cite{CDFL}. 

\subsection{Dual canonical basis}\label{subsec:dual canonical basis}
In this paper, we only need to use the case that Lie algebra $\mathfrak{g}$ is of type A. Let $\mathfrak{g}$ be a simple Lie algebra of type $A$ over $\mathbb{C}$ and consider its triangular decomposition $\mathfrak{g} = \mathfrak{n} \oplus \mathfrak{h} \oplus \mathfrak{n}_{-}$. Let $v$ be an indeterminate, and let
\[
U_v(\mathfrak{g}) = U_v(\mathfrak{n}) \otimes U_v(\mathfrak{h}) \otimes U_v(\mathfrak{n}_{-})
\]
be the corresponding Drinfeld-Jimbo quantum enveloping algebra over $\mathbb{C}(v)$, defined via a $v$-analogue of the Chevalley-Serre presentation of $U(\mathfrak{g})$. Lusztig \cite{Lus90} has defined a canonical basis $B$ of $U_v(\mathfrak{n})$. It is a linear basis of $U_v(\mathfrak{n})$ with nice positivity properties. The algebra $U_v(\mathfrak{n})$ is endowed with a distinguished scalar product. Denote by $B^*$ the basis of $U_v(\mathfrak{n})$ adjoint to the canonical basis $B$ with respect to this scalar product. Let $A_v(\mathfrak{n})$ be the graded dual of the vector space $U_v(\mathfrak{n})$. It can be endowed with a multiplication coming from the comultiplication of $U_v(\mathfrak{g})$ and it can be regarded as the quantum coordinate ring of the unipotent group $N$ with Lie algebra $\mathfrak{n}$ \cite{GLS13}. The basis $B^*$ naturally corresponds to a basis of $A_v(\mathfrak{n})$ and is referred to as the dual canonical basis of $A_v(\mathfrak{n})$. Upon specializing $v \to 1$, the basis $B^*$ yields a basis $\mathcal{B}$ for $\mathbb{C}[N]$, which is called the dual canonical basis of $\mathbb{C}[N]$. The basis $B^*$ can be identified with a basis of $A_v(\mathfrak{n})$ and $B^*$ is called the dual canonical basis of $A_v(\mathfrak{n})$. When $v \to 1$, $B^*$ become a basis $\mathcal{B}$ of $\mathbb{C}[N]$ and it is called the dual canonical basis of $\mathbb{C}[N]$. In \cite{HL15}, Hernandez and Leclerc showed that $\mathbb{C}[N]$ can be categorified using a certain category $\mathcal{C}_Q$ of representations of the quantum affine algebra $U_q(\widehat{\mathfrak{g}})$. There is a one to one correspondence between simple modules in $\mathcal{C}_Q$ and dual canonical basis elements in $\mathbb{C}[N]$. 

In type $A$ the variety $N$ is the variety consisting of all unipotent upper triangular matrices in $SL_n$. It is the big cell of the complete flag variety $\mathcal{F}\ell_{d_1,\ldots,d_r;n}$ when $r=n-1$ and $d_i=i$ for $i \in [1,n]$. General partial flag variety $\mathcal{F}\ell_{d_1,\ldots,d_r;n}$ also has a dual canonical basis and the basis elements can be identified with semistandard Young tableaux \cite{BL24, Li24}. When $r=1$, $\mathcal{F}\ell_{d_1,\ldots,d_r;n}$ is the Grassmannian variety $\Gr(k,n)$, where $k=d_1$. Denote by $\CC[\Gr(k,n)]$ the coordinate ring of the Grassmannian variety $\Gr(k,n)$. It is shown in \cite{Sco06} that $\CC[\Gr(k,n)]$ has a cluster algebra structure. The coordinate ring $\CC[\Gr(k,n)]$ can be categorified using a certain category $\mathcal{C}_{\ell}$ of representations of quantum affine algebra $U_q(\widehat{\mathfrak{sl}_k})$ \cite{HL10}, where $k+\ell+1=n$, and elements in the dual canonical basis of $\CC[\Gr(k,n)]$ are in one to one correspondence with semistandard tableaux with rectangular shapes and with $k$ rows \cite{CDFL}. We describe this correspondence in the next subsection.  

\subsection{Semistandard Young tableaux} \label{subsec:semistandard Young tableaux}
A semistandard Young tableau is a Young tableau with weakly increasing rows and strictly increasing columns, see \cite{fulton1997young}. For $k,n \in \ZZ_{\ge 1}$, we denote by ${\rm SSYT}(k, [n])$ the set of rectangular semistandard Young tableaux with $k$ rows and with entries in $[n] = \{1,\ldots, n\}$ (with arbitrarly many columns). The empty tableau is denoted by $\mathds{1}$. 

In the following, we recall a few definitions related to semistandard Young tableaux. For $S,T \in {\rm SSYT}(k, [n])$, let $S \cup T$ be the row-increasing tableau whose $i$th row is the union of the $i$th rows of $S$ and $T$ (as multisets), \cite{CDFL}. We call $S$ a factor of $T$, and write $S \subset T$, if the $i$th row of $S$ is contained in that of $T$ (as multisets), for $i \in [k]$. In this case, we define $\frac{T}{S}=S^{-1}T=TS^{-1}$ to be the row-increasing tableau whose $i$th row is obtained by removing that of of $S$ from that of $T$ (as multisets), for $i \in [k]$. A tableau $T \in {\rm SSYT}(k, [n])$ is trivial if each entry of $T$ is one less than the entry below it. For any $T \in {\rm SSYT}(k, [n])$, we  denote by $T_{\text{red}} \subset T$ the semistandard tableau obtained by removing a maximal trivial factor from $T$. For trivial $T$, one has $T_{\text{red}} = \mathds{1}$. Let ``$\sim$'' be the equivalence relation on $S, T \in {\rm SSYT}(k, [n])$ defined by: $S \sim T$ if and only if $S_{\text{red}} = T_{\text{red}}$. We denote by ${\rm SSYT}(k, [n],\sim)$ the set of $\sim$-equivalence classes. A one-column tableau is called a fundamental tableau if its entries are of the form $[j,j+k]\setminus \{a\}$ for some $a \in [j+1, j+k-1]$. For $i \in [k-1]$, $j \in [n-k]$, denote by $T_{i,j}$ the fundamental tableau with entries $[j,j+k]\setminus \{i+j\}$. A tableau $T$ is said to have small gaps if each of its columns is a fundamental tableau. Then any tableau in $\SSYT(k,[n])$ is $\sim$-equivalent to a unique small gap semistandard tableau. 

Denote by $\SSYT(k,[n],\sim)$ the set of equivalence classes of tableaux in $\SSYT(k,[n])$. We also call equivalence classes in $\SSYT(k,[n],\sim)$ tableaux. Denote by $\CC[\Gr(k,n,\sim)]$ the quotient of $\CC[\Gr(k,n)]$ by the ideal generated by $p_{i,i+1,\ldots,i+k-1}-1$, $i \in [1,n-k+1]$. 
It is shown in \cite{CDFL} that there is a one to one correspondence between dual canonical basis elements in $\CC[\Gr(k,n,\sim)]$ and tableaux in $\SSYT(k,[n],\sim)$. A map $\ch(T)$ is defined in Section 5 of \cite{CDFL} which sends a tableau $T$ in $\SSYT(k,[n],\sim)$ to an element in the dual canonical basis of $\CC[\Gr(k,n,\sim)]$. A map $\ch(T)$ is also defined in Section 5 of \cite{CDFL} for $T \in \SSYT(k,[n])$ which conjecturally sends $T$ to a dual canonical basis element in $\CC[\Gr(k,n)]$. We say that $T \in \SSYT(k,[n])$ is prime if there is no non-trivial tableaux $T', T'' \in \SSYT(k,[n])$ such that $\ch(T) = \ch(T')\ch(T'')$. 

\section{From dual canonical bases to positroidal subdivisions} \label{sec:From dual canonical bases to positroidal subdivisions}
In this section, we prove that every element in the dual canonical basis of a Grassmannian cluster algebra induces a positroidal subdivision of the hypersimplex $\Delta(k,n)$ under the map introduced by Speyer and Williams. 

The total number of fundamental tableaux in $\SSYT(k,[n])$ is $(k-1)(n-k)$. Let $T\in \SSYT(k,[n])$. Factor $T$ as union of fundamental tableaux $T \sim \cup_{i,j} T_{i,j}^{\cup c_{i,j}}$. Define $v_T = \sum_{i,j} c_{i,j} e_{i,j}$, where $e_{i,j}$'s form the standard basis of $\RR^{(k-1)(n-k)}$, see Section 6 in \cite{EL2024}. Denote by $F_{k,n}$ (see \cite{SW2005tropical}) the map from $\RR^{(k-1)(n-k)}$ to ${\rm Span}_{\RR}\{e^J: J \in \binom{[n]}{k}\} / L$, where $L$ is the lineality space, defined as follows
\begin{align*}
F_{k,n}(y) = \sum_{J \in \binom{[n]}{k}} P_{J}(y) e^J,
\end{align*}
where $P_{J}$ is the tropicalization of Pl\"{u}cker coordinate $p_J$ evaluating on the web matrix $W$. By definition, $F_{k,n}(v_T)=0$ for any trivial tableau $T$. We denote $\wt(T) = \wt_T = (P_J(v_T))_{J \in \binom{[n]}{k}}$.

\begin{theorem}\label{thm:main_thm}
For every $T \in \SSYT(k,[n])$,  $F_{k,n}(v_T)$ induces a positroidal subdivision of $\Delta(k,n)$.
\end{theorem}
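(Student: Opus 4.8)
The plan is to show that $\wt(T) = F_{k,n}(v_T)$ lies in the positive Dressian $\Dr^{+}(k,n)$, and then invoke Theorem~\ref{thm:pos_trop_Grass_equals_pos_Dress} together with the characterization that a vector lies in $\Dr^{+}(k,n)$ if and only if it induces a positroidal subdivision of $\Delta(k,n)$. Since by construction $\wt(T) = (P_J(v_T))_{J \in \binom{[n]}{k}}$ is obtained by tropicalizing the Pl\"ucker coordinates $p_J$ evaluated on the web matrix $W$, the key point is that the web matrix $W$ of Speyer--Williams has the property that \emph{every} Pl\"ucker coordinate $p_J(W)$, as a rational function in the parameters, is a subtraction-free (positive) Laurent expression. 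This is exactly the content of the parametrization of $\Gr^{>0}(k,n)$ by the web matrix: plugging in positive real values for the parameters produces a totally positive point, and conversely the Pl\"ucker polynomials are positive combinations of monomials in the parameters. Consequently the tropicalization $P_J$ is a genuine tropical (min- or max-)polynomial with no "illegal" cancellations, and the tuple $(P_J(y))_J$ for \emph{any} real input $y$ automatically satisfies the tropical three-term Pl\"ucker relations \emph{in their positive form} — i.e.\ it lies in $\Trop^{+}\Gr(k,n) = \Dr^{+}(k,n)$. This is essentially the original theorem of Speyer--Williams that $F_{k,n}$ maps $\RR^{(k-1)(n-k)}$ into (a fan isomorphic to) the positive tropical Grassmannian; so for a completely general point $y$ the statement is known. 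The role of $T$ is only to single out the particular lattice point $v_T = \sum_{i,j} c_{i,j} e_{i,j} \in \RR^{(k-1)(n-k)}$, and since $v_T$ is just some point of the domain, $F_{k,n}(v_T) \in \Trop^{+}\Gr(k,n)$, hence induces a positroidal subdivision.

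Concretely I would carry out the following steps. First, recall precisely the web matrix $W = W_{k,n}$ and the map $F_{k,n}$ from \cite{SW2005tropical}, including the statement that $F_{k,n}$ is a piecewise-linear isomorphism from $\RR^{(k-1)(n-k)}$ onto the positive tropical Grassmannian $\Trop^{+}\Gr(k,n)$ (modulo lineality). Second, note that the image of $F_{k,n}$ therefore lands in $\Dr^{+}(k,n)$ by Theorem~\ref{thm:pos_trop_Grass_equals_pos_Dress}. Third, observe that $v_T$, being a nonnegative integer combination of the standard basis vectors $e_{i,j}$, is a bona fide element of the domain $\RR^{(k-1)(n-k)}$; hence $\wt(T) = F_{k,n}(v_T) \in \Dr^{+}(k,n)$. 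Fourth, apply the positive analogue of Speyer's theorem (the "$\Dr^{+}$ version", stated in the excerpt just after Speyer's Proposition 2.2, following \cite{speyer2021positive, arkani2021positive}): a vector lies in $\Dr^{+}(k,n)$ if and only if it induces a positroidal subdivision of $\Delta(k,n)$. This yields the claim. One should also remark that the well-definedness of $\wt(T)$ — i.e.\ its independence of the chosen factorization of $T$ into fundamental tableaux and its invariance under $\sim$-equivalence (trivial factors map to $0$) — is needed so that the induced subdivision depends only on $T$; this follows from $F_{k,n}(v_{T})=0$ for trivial $T$ and linearity issues being absorbed into the lineality space $L$.

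The main obstacle, and the place where some genuine verification is required rather than citation, is the passage from "the Pl\"ucker coordinates on the web matrix are subtraction-free" to "the tropicalized tuple satisfies the positive tropical Pl\"ucker relations for every real input." In \cite{SW2005tropical} this is phrased for the positive tropical Grassmannian as defined via valuations of points of $\Gr^{>0}$ over the positive Puiseux series $\RR^{+}$; one must check that evaluating the tropical polynomials $P_J$ at an arbitrary real vector $y$ (not a priori of the form $\mathrm{val}(x)$ for $x \in \Gr^{>0}(k,n)(\RR^{+})$) still produces a point of $\Trop^{+}\Gr(k,n)$. This is true because one can realize any $y \in \RR^{(k-1)(n-k)}$ by choosing the web-matrix parameters to be monomials $\epsilon^{y_{i,j}}$ in a formal positive infinitesimal $\epsilon$, which gives an honest point of $\Gr^{>0}$ over $\RR^{+}$ whose Pl\"ucker valuations are exactly $(P_J(y))_J$; so the surjectivity/image statement of \cite{SW2005tropical} applies verbatim. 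Once this identification is in place, the rest is a direct chain of cited equivalences, so I expect the proof to be short modulo a careful recollection of the Speyer--Williams setup.
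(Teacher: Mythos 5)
Your proposal is correct and follows essentially the same route as the paper: show $F_{k,n}(v_T)$ lies in $\Trop^{+}\Gr(k,n)$ via the Speyer--Williams web-matrix parametrization, identify this with $\Dr^{+}(k,n)$ by Theorem~\ref{thm:pos_trop_Grass_equals_pos_Dress}, and conclude via the characterization of points of the positive Dressian as inducing positroidal subdivisions. Your extra care in justifying why an \emph{arbitrary} real input $y$ (realized via monomial Puiseux-series parameters) yields a point of $\Trop^{+}\Gr(k,n)$ fills in a step the paper's proof states without elaboration, but it is the same argument.
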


\begin{proof}
We first realize that given $T\in \SSYT(k,[n])$, the image of the map $F_{k,n}$ spans the space ${\rm Span}_{\RR}\{e^J: J \in \binom{[n]}{k}\}$ and by \cite{SW2005tropical} we know that the Pl\"{u}cker coordinates $\{p_{J}\}_{J \in \binom{[n]}{k}}$ defined by elements in the web matrix lie in the positive Grassmannian $\Gr^{+}(k,n)$. By definition of the positive tropical Grassmannian, the corresponding tropicalizations of the Pl\"{u}cker coordinates $ 
{\rm wt}_{T} = (P_{J}(v_T))_{J \in \binom{[n]}{k}}$ lies in the positive tropical Grassmannian, i.e., ${\rm wt}_{T} \in \TropGr^{+}(k,n)$. By Theorem \ref{thm:pos_trop_Grass_equals_pos_Dress} we know that the positive tropical Grassmannian $\TropGr^{+}(k,n)$ is equal to the positive Dressian $\Dr^{+}(k,n)$ and therefore by the classification of points in the cones of the positive Dressian given in \cite[Theorem 1.1]{arkani2021positive} we conclude that the cone corresponding to ${\rm wt}_{T}$ in $\TropGr^{+}(k,n) = \Dr^{+}(k,n)$ corresponds to a positroidal subdivision of $\Delta(k,n)$. 
\end{proof}

Theorem \ref{thm:main_thm} shows that the weight $\wt_T$ of a tableau $T  \in \SSYT(k,[n])$ is the weight vector of the positroidal subdivision of $\Delta(k,n)$ induced by $T$.

We say that $T \in \SSYT(k,[n])$ has a frozen factor $S$ if $S \in \SSYT(k,[n])$ is a one-column tableau whose entries form a cyclic interval and $T=S\cup T'$, where $T' \in \SSYT(k,[n])$. Every $T \in \SSYT(k,[n])$ (resp. $T \in \SSYT(k,[n],\sim)$) can be written as $T=S^{\cup k}$ for some $S \in \SSYT(k,[n])$, where $k \in \ZZ_{\ge 1}$ is as larger as possible. We denote $\underline{T}=S$. 
\begin{conjecture} \label{conj:prime tableaux correspond to coarsest positroidal subdivision}
Let $T \in \SSYT(k,[n])$ be a tableau which has no frozen factors. If $F_{k,n}(v_T)$ defines a coarsest positroidal subdivision of $\Delta(k,n)$, then $\underline{T}$ is prime.
\end{conjecture}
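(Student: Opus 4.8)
I would argue the contrapositive: assuming $\underline T$ is not prime, I show that $F_{k,n}(v_T)$ does not lie on a ray of $\Dr^{+}(k,n)$, i.e.\ the positroidal subdivision $\Sigma$ it induces is not coarsest. Since $T=\underline T^{\cup m}$ gives $v_T=m\,v_{\underline T}$ and each tropical Pl\"ucker polynomial $P_J$ is positively homogeneous, $F_{k,n}(v_T)$ and $F_{k,n}(v_{\underline T})$ induce the same subdivision, and $\underline T$ inherits from $T$ the absence of frozen factors; so put $S:=\underline T$ (so that $\underline S=S$) and fix nontrivial $T',T''\in\SSYT(k,[n])$ with $\ch(S)=\ch(T')\ch(T'')$. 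Because $S$ has no frozen factor, neither $T'$ nor $T''$ is trivial or frozen: a trivial factor of $T'$ would make $\ch(T')$ divisible by a frozen Pl\"ucker coordinate $p_I$, forcing $p_I\mid\ch(S)$ and a frozen factor of $S$. Reading the relation $\ch(S)=\ch(T')\ch(T'')$ in $\CC[\Gr(k,n,\sim)]$ --- where $\ch$ is the dual canonical basis and a product of two basis elements has $\ch(T'\cup T'')$ as its leading term with coefficient one \cite{CDFL} --- forces $S\sim T'\cup T''$, hence $v_S=v_{T'}+v_{T''}$ by additivity of the fundamental-tableau multiplicity vector under $\cup$ \cite{EL2024}. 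Finally $v_{T'}$ and $v_{T''}$ cannot be proportional: otherwise both are multiples of a primitive $v_0$, so $v_S=(p+q)\,v_0$ with $p+q\ge 2$ and $S\sim R^{\cup(p+q)}$ for the small-gap tableau $R$ with $v_R=v_0$, a nontrivial power, contradicting $\underline S=S$.

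This reduces the problem to: \emph{given $v_S=v_{T'}+v_{T''}$ with $v_{T'},v_{T''}$ nonzero and non-proportional and with $\wt_{T'},\wt_{T''}$ weight vectors of nontrivial positroidal subdivisions (Theorem~\ref{thm:main_thm}), show $F_{k,n}(v_S)$ is not on a ray of $\Dr^{+}(k,n)$.} The favourable case is when $v_{T'}$ and $v_{T''}$ lie in a single common cone $C$ of the domain fan obtained by pulling back the fan structure of $\Dr^{+}(k,n)=\Trop^{+}\Gr(k,n)$ along the piecewise-linear homeomorphism $F_{k,n}$ (see \cite{SW2005tropical}): then $v_S\in C$ too, $F_{k,n}$ is linear on $C$, so $\wt_S=\wt_{T'}+\wt_{T''}$, and since the regular subdivision induced by a sum of weight vectors is their common refinement \cite{de2010triangulations}, $\Sigma=\Sigma_{\wt_{T'}}\wedge\Sigma_{\wt_{T''}}$. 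Were $\Sigma$ coarsest, its cone would be a ray, which would force $\wt_{T'}$ and $\wt_{T''}$ --- both lying in that ray --- to be proportional, hence $v_{T'}$ and $v_{T''}$ proportional by injectivity and linearity of $F_{k,n}$ on $C$; this contradicts the previous paragraph. So $\Sigma$ is not coarsest.

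\textbf{The main obstacle} is the remaining case, where $v_{T'}$ and $v_{T''}$ lie in distinct cones of the domain fan: there, additivity of the $v$-vectors does not descend to additivity of the $\wt$-vectors, so $\Sigma$ cannot be compared so cleanly with $\Sigma_{\wt_{T'}}$ and $\Sigma_{\wt_{T''}}$, and the fan structure alone is insufficient, since in a complete fan a point on a ray may be a sum of two points lying off that ray. Overcoming this will need new input, for example a description of the rays of $\Dr^{+}(k,n)$ precise enough to show that the only tableaux with weight collinear to a given ray generator are the powers of one ``indecomposable'' tableau on that ray --- so that any factorization respects the ray and reduces to the proportional case already excluded. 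For $k=2$ this is exactly what underlies the proven half of the statement: $\CC[\Gr(2,n)]$ is of finite cluster type, so the dual canonical basis consists of cluster monomials, the rays of $\Dr^{+}(2,n)$ are the splits indexed by the non-frozen cluster variables $p_{ij}$, and $g$-vectors are additive on products of cluster variables in a common cluster, so if $S$ is not prime then $\wt_S$ lies in a cone of dimension at least two. For $k\ge 3$ the ray structure of $\Dr^{+}(k,n)$ is far richer --- this is precisely the phenomenon responsible for the failure of the converse implication --- and I would approach the obstacle through the monoidal categorification of $\CC[\Gr(k,n)]$ by representations of $U_q(\widehat{\mathfrak{sl}_k})$, translating proportionality of weight vectors into a statement about real prime simple modules with a single spectral parameter and invoking the theory of $R$-matrices and normal sequences to exclude a factorization compatible with a ray.
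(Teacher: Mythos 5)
The statement you are trying to prove is stated in the paper as Conjecture \ref{conj:prime tableaux correspond to coarsest positroidal subdivision} and is left open there: the paper offers no proof, only a remark that the \emph{converse} fails for $\Gr(3,8)$. So there is no argument of the authors' to compare yours against, and the real question is whether your proposal closes the conjecture. It does not, and you say so yourself: the entire difficulty is concentrated in the ``main obstacle'' paragraph. Your reductions (passing from $T$ to $\underline T$ via positive homogeneity of the tropical Pl\"ucker polynomials, extracting $S\sim T'\cup T''$ from a factorization of $\ch(S)$ and deducing $v_S=v_{T'}+v_{T''}$, and ruling out proportional summands) are sensible and would be genuine partial progress, but the case where $v_{T'}$ and $v_{T''}$ lie in different cones of the domain fan is not a corner case --- for $k\ge 3$ it is the generic situation, and it is exactly where the known failure of the converse lives, so nothing short of new structural input about the rays of $\Dr^{+}(k,n)$ versus prime tableaux will resolve it. A proposal whose stated strategy is ``invoke a description of the rays precise enough to make the problem go away'' is a research plan, not a proof.

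Two further soft spots even in the part you do argue. First, the fact you cite from \cite{de2010triangulations} is one-directional: the regular subdivision induced by $w_1+w_2$ is \emph{coarsened} by the common refinement of the subdivisions induced by $w_1$ and $w_2$; equality is not automatic. Your favourable-case argument survives this because you only need the face property of cones (if $x+y$ lies in a face of a cone containing $x$ and $y$, then both $x$ and $y$ lie in that face), but you should argue it that way rather than via the common-refinement identity. Second, the step ``neither $T'$ nor $T''$ is trivial or frozen'' needs more care: the equivalence relation $\sim$ quotients out trivial factors, so a frozen or trivial column of $T'$ need not survive to a frozen factor of $S$ in the class $S\sim T'\cup T''$; you are implicitly choosing reduced representatives, and that choice interacts with the definition of $v_T$ (which is only well defined up to the kernel of $F_{k,n}$ on trivial tableaux). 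Neither issue is fatal, but both need to be written down precisely before even the favourable case is complete.
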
 

\begin{remark}
The converse of Conjecture \ref{conj:prime tableaux correspond to coarsest positroidal subdivision} is not true. In the case of $\Gr(3,8)$, the following $8$ prime tableaux 

\begin{align*}
\scalemath{0.7}{
\begin{ytableau}
1 & 2 & 3\\ 2 & 5 & 6\\ 4 & 7 & 8
\end{ytableau}, \
\begin{ytableau}
1 & 3 & 4 \\ 2 & 5 & 6 \\ 5 & 7 & 8
\end{ytableau}, \
\begin{ytableau}
1 & 3 & 4 \\ 2 & 6 & 7 \\ 5 & 8 & 8
\end{ytableau}, \
\begin{ytableau}
1 & 2 & 4 \\ 3 & 3 & 7 \\ 5 & 6 & 8
\end{ytableau}, \
\begin{ytableau}
1 & 1 & 2 \\ 3 & 4 & 5 \\ 6 & 7 & 8
\end{ytableau}, \
\begin{ytableau}
1 & 2 & 5 \\ 3 & 4 & 7 \\ 6 & 6 & 8
\end{ytableau}, \
\begin{ytableau}
1 & 2 & 3 \\ 4 & 4 & 5 \\ 6 & 7 & 8
\end{ytableau}, \
\begin{ytableau}
1 & 2 & 3 \\ 4 & 5 & 6 \\ 7 & 7 & 8
\end{ytableau}, }
\end{align*}
do not correspond to coarsest positroidal subdivisions of $\Delta(3,8)$. This is consistent with the fact that there are $120$ rays in the positive Tropical Grassmannian ${\TropGr(3,8)}^{+}$ but the cluster complex for the Grassmannian $\Gr(3,8)$ has 128 rays where the additional 8 rays correspond to $8$ cluster variables of degree $3$ \cite{bendle2024massively}. These $8$ cluster variables correspond to exactly the above $8$ tableaux. 
\end{remark}

\begin{remark}
A related conjecture is Conjecture 6.1 in \cite{EL2024} in the case of ${\bf N}_{k,n}^{(1)}$ where it is conjectured that the facets of the Newton polytope ${\bf N}_{k,n}^{(1)}$ correspond to prime modules of the quantum affine algebra $U_q(\widehat{\mathfrak{sl}_k})$. 
\end{remark}

\section{Split positroidal subdivisions} \label{sec:Split positroidal subdivisions}
In this section we restrict to a certain subclass of coarsest positroidal subdivisions, namely \emph{split positroidal subdivisions}. In the case when $k=2$ we prove that the cells of a split positroidal subdivision of $\Delta(2,n)$ can be completely described by the non-frozen prime tableaux under the map $F_{k,n}$.

\subsection{\texorpdfstring{Split positroidal subdivisions for $\Delta(2,n)$}{Split positroidal subdivisions for Delta(2,n)}}

We recall the following structural result about non-frozen prime tableau in $\SSYT(2, [n])$ \cite{CDHHHL}.

\begin{lemma}\label{lem:prime_calssific_2_n}
Every non-frozen prime tableau in $\SSYT(2, [n])$ is a one-column tableau with entries $\{a, b\}$, $a<b$. 
\end{lemma}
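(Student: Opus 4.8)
The plan is to translate the primality of a tableau $T$ in $\SSYT(2,[n])$ into a combinatorial statement about its factorization into fundamental (equivalently, two-element cyclic-interval-complement) tableaux, and then show that only single-column tableaux can resist factoring. Recall that for $k=2$ the fundamental tableaux $T_{1,j}$ are exactly the one-column tableaux $\{j, j+2\}$ for $j \in [n-2]$, but after passing to $\sim$-equivalence classes every one-column tableau $\{a,b\}$ with $b-a \ge 2$ is allowed, and frozen columns are those with $b = a+1$ (cyclically). So a non-frozen tableau is one none of whose columns is a cyclic interval of size $2$. First I would set up the standard fact from \cite{CDFL} (specialized to $k=2$) that $\ch$ is multiplicative on $\cup$ for a pair of tableaux precisely when the two tableaux are \emph{weakly separated} (or, in the $\Gr(2,n)$ language, when the associated chords/arcs are non-crossing); this is the key input that reduces primality to a crossing condition on arcs.

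The core of the argument is then: given a non-frozen $T \in \SSYT(2,[n])$ with at least two columns, I must exhibit a nontrivial factorization $\ch(T) = \ch(T')\ch(T'')$ into non-trivial tableaux. View each column $\{a_i, b_i\}$ of $T$ (after reducing to small-gap form, so each column has $b_i - a_i \ge 2$ cyclically) as a chord of the $n$-gon. If the columns can be split into two nonempty groups, each internally and mutually non-crossing, we are done by multiplicativity. The obstruction is a set of pairwise crossing chords of size $\ge 2$; by a standard Helly-type / planarity argument on chords of a polygon, a family of pairwise-crossing chords can be arranged so that, together with the two-column case, one shows the only genuinely irreducible configuration with all crossings is... in fact there is no such configuration with $\ge 2$ columns that stays prime, because any chord $\{a,b\}$ can be "resolved" using the $\Gr(2,n)$ Plücker three-term relation $p_{ac}p_{bd} = p_{ab}p_{cd} + p_{ad}p_{bc}$, and the tropical/dual-canonical interpretation forces that whenever two columns cross, the product is actually a sum of two dual canonical basis monomials, so $T$ itself (being a single monomial) cannot have crossing columns — hence its columns are pairwise non-crossing, hence $T$ is "weakly separated", hence (for $k=2$, where weakly separated collections of size $\ge 2$ always decompose) $T$ factors unless it has a single column. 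This is the step I expect to be the main obstacle: making precise that a dual canonical basis element with multiple columns and all columns pairwise non-crossing genuinely factors (rather than merely lying in the span of products), which requires invoking the precise description in \cite{CDFL} of which $\ch(T)$ are products — for $k=2$ this should follow because non-crossing two-column tableaux generate a subalgebra on which $\ch$ is a monoid homomorphism.

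Concretely, the steps in order: (1) reduce $T$ to its small-gap representative and assume for contradiction it is non-frozen, prime, with $m \ge 2$ columns $C_1, \dots, C_m$; (2) show no two columns $C_i, C_j$ can cross as chords — otherwise the Plücker three-term relation expresses $\ch(C_i)\ch(C_j)$, and hence $\ch(T)$, as a sum of two distinct dual canonical basis elements, contradicting that $\ch(T)$ is a single basis element (here one uses that $\ch$ on a two-column tableau reproduces the relevant quadratic, cf. \cite{CDFL}); (3) conclude the chords $C_1, \dots, C_m$ are pairwise non-crossing, i.e. form a weakly separated collection; (4) invoke the $k=2$ multiplicativity statement from \cite{CDFL} to write $\ch(T) = \prod_i \ch(C_i)$, which for $m \ge 2$ contradicts primality since each $\ch(C_i)$ is a non-trivial dual canonical basis element (it is non-frozen, so $\ch(C_i)$ is not a unit in $\CC[\Gr(2,n,\sim)]$); (5) therefore $m = 1$, i.e. $T$ is a one-column tableau $\{a,b\}$ with $a < b$, which is the claim. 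The converse direction of the lemma (that such one-column non-frozen tableaux are indeed prime) presumably was established in \cite{CDHHHL} and is not reproved here, so I would cite it and focus the written proof on the forward implication above.
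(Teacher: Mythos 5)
First, note that the paper does not actually prove this lemma: it is recalled from \cite{CDHHHL} with no argument given, so there is no in-paper proof to match your attempt against. Judged on its own terms, your outline has the right overall shape (a multi-column non-frozen tableau must factor, hence cannot be prime), but step (2) contains a genuine error. You argue that if two columns $C_i=\{a,c\}$ and $C_j=\{b,d\}$ of $T$ cross, then the three-term relation $p_{ac}p_{bd}=p_{ab}p_{cd}+p_{ad}p_{bc}$ expresses ``$\ch(C_i)\ch(C_j)$, and hence $\ch(T)$,'' as a sum of two basis elements, contradicting that $\ch(T)$ is a single basis element. This conflates $\ch(T)$ with the product of the Pl\"ucker coordinates of the \emph{literal} columns of $T$. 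That identity $\ch(T)=\prod_i\ch(C_i)$ is exactly what fails when columns cross: a semistandard tableau with crossing columns is a perfectly legitimate tableau and still indexes a single dual canonical basis element --- namely one of the two non-crossing monomials appearing on the right-hand side of the Pl\"ucker relation, not the crossing monomial on the left. Your step (2) would ``prove'' that no semistandard tableau has crossing columns, which is false (e.g.\ the two-column tableau with rows $(a,b)$ and $(c,d)$ for $a<b<c<d$ has columns $\{a,c\}$ and $\{b,d\}$).

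The repair is to bypass the crossing/non-crossing case split on the literal columns entirely and use the non-crossing re-pairing directly: for any $T\in\SSYT(2,[n])$ with $m$ columns there is a unique unordered $m$-tuple $(S_1,\ldots,S_m)$ of pairwise weakly separated one-column tableaux whose union is $T$ (this is the statement the paper itself invokes in Conjecture \ref{conj:n-column_tab_k=2}, citing \cite[Lemma 3.1]{early2024classification}), and for $k=2$ one has $\ch(T)=\prod_{i}\ch(S_i)$ because the degree-$m$ part of the dual canonical basis of $\CC[\Gr(2,n)]$ consists precisely of the non-crossing Pl\"ucker monomials. With that in hand, any $T$ with $m\ge 2$ columns is a product of $m$ one-column basis elements; since $T$ is non-frozen these factors are not all trivial in $\CC[\Gr(2,n,\sim)]$, and one checks at most one can be trivial, so $T$ is not prime. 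Your steps (3)--(5) then go through unchanged. So the gap is localized but real: you need the multiplicativity statement for the \emph{re-paired} non-crossing resolution, not for the columns of $T$ as written, and the purported contradiction in step (2) does not exist.
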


\begin{theorem}\label{thm:split_2_n}
Let $T = \scalemath{0.7}{\begin{ytableau}
    i \\ j
\end{ytableau} } \in \SSYT(2,[n])$, $i < j$, be a non-frozen prime tableau, then its image under the map $F_{k,n}$ induces a split positroidal subdivision of $\Delta(2,n)$.

\end{theorem}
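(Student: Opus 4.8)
## Proof proposal

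The plan is to compute the weight vector $\wt_T = F_{2,n}(v_T)$ explicitly for the one-column tableau $T$ with entries $\{i,j\}$, and then identify the resulting subdivision of $\Delta(2,n)$ as the split coming from a single hyperplane. First I would unwind the combinatorics of the web matrix map $F_{2,n}$ of Speyer--Williams restricted to $k=2$: for $\Gr(2,n)$ the fundamental tableaux are exactly the one-column tableaux $T_{1,j}$ with entries $\{j, j+2\}$ (there are $n-2$ of them), and the vector $v_T \in \RR^{n-2}$ records the multiplicities with which $T$ decomposes as a union of these after passing to small-gap form. Since $T$ is a single non-frozen column $\{i,j\}$, its small-gap representative is a concrete union of the $T_{1,m}$'s, so $v_T$ is an explicit $0$--$1$ (or small-integer) vector supported on the indices $m$ with $i \le m < j$. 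I would then tropicalize the Plücker coordinates $p_J$ on the web matrix to get $P_J(v_T)$ for each $J = \{a,b\} \in \binom{[n]}{2}$; because every three-term Plücker relation for $\Gr(2,n)$ has the form $p_{ac}p_{bd} = p_{ab}p_{cd} + p_{ad}p_{bc}$, the tropical value $P_J(v_T)$ will be a piecewise-linear function that one can read off directly, and I expect it to take the form of (a linear functional plus) a single ``corner'' term, something like $P_{\{a,b\}}(v_T) = \min(\text{number of indices of the column in }[a,b))$ — i.e. a submodular-type function on pairs.

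Next I would use the known dictionary between weight vectors in $\Dr^+(2,n) = \TGr^+(2,n)$ (Theorem~\ref{thm:pos_trop_Grass_equals_pos_Dress}) and subdivisions of $\Delta(2,n)$: the hypersimplex $\Delta(2,n)$ is the matroid polytope of $U_{2,n}$, its matroidal subdivisions are exactly the subdivisions into connected matroid polytopes, and for $k=2$ these are completely classified — every matroid subdivision of $\Delta(2,n)$ is automatically positroidal and corresponds to a collection of pairwise non-crossing splits, each split being induced by a ``circular chord'' that partitions $[n]$ into two cyclic arcs (this is the classical picture relating $\Dr(2,n)$ to the space of phylogenetic trees / the second hypersimplex). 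Concretely, a split of $\Delta(2,n)$ is given by a hyperplane $\sum_{m \in A} x_m = 1$ for a proper subset $A \subsetneq [n]$ with $2 \le |A| \le n-2$ that is a cyclic interval, and the associated weight vector is (up to lineality) $w_J = 1$ if $J \subseteq A$ or $J \subseteq [n]\setminus A$, and $w_J = 0$ (the generic value) otherwise — a single split, hence exactly two maximal cells.

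The crux of the argument, then, is to match my explicit formula for $\wt_T$ with the weight vector of the split hyperplane $\sum_{m} x_m = 1$ where the sum runs over the cyclic interval determined by $\{i,j\}$ — I expect the relevant interval to be $\{i, i+1, \dots, j-1\}$ or its complement. I would verify that $P_J(v_T)$, modulo the lineality space $L$, is constant on pairs $J$ lying entirely inside one arc and strictly smaller (or larger, depending on sign conventions for $\min$ vs.\ $\max$) on pairs $J$ that straddle the chord, so that the induced regular subdivision has precisely the two cells $P_{M_A}$ and $P_{M_{[n]\setminus A}}$, the matroid polytopes of the two series-connected matroids obtained by splitting along the chord. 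Both of these cells are positroid polytopes (indeed, for $k=2$ all matroid subdivisions are positroidal, so this is automatic once we know $\wt_T$ induces a matroidal subdivision, which Theorem~\ref{thm:main_thm} already guarantees). The main obstacle I anticipate is purely bookkeeping: getting the small-gap decomposition of the non-frozen column $\{i,j\}$ right and carrying the normalization by the lineality space $L$ through the tropical Plücker computation, so that the ``two cells, not more'' conclusion is clean; once the formula for $\wt_T$ is pinned down, the identification with a split is immediate from the $k=2$ classification. A secondary point to be careful about is the edge cases where $\{i,j\}$ is \emph{almost} frozen (e.g. $j = i+2$), to confirm the chord still yields a genuine split with both arcs of size $\ge 2$ rather than a trivial subdivision — this is exactly what the hypothesis ``non-frozen'' rules out, via Lemma~\ref{lem:prime_calssific_2_n}.
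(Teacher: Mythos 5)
Your proposal is correct in outline but takes a genuinely different route from the paper. The paper's proof is indirect: it invokes Theorem \ref{thm:main_thm} for positroidality, then assumes the subdivision is not a split, applies the split decomposition theorem for $\Delta(2,n)$ (every matroidal subdivision is a common refinement of splits, with weight decomposing as a sum of split weights), and derives from this a factorization $T=\cup_i T_i$ into at least two nontrivial tableaux, contradicting Lemma \ref{lem:prime_calssific_2_n}. You instead propose to compute $\wt_T=F_{2,n}(v_T)$ explicitly and match it against the known weight vector of a hyperplane split; this is more constructive, avoids the split decomposition theorem (and the delicate step of pulling a decomposition of weights back to a factorization of tableaux, which in the paper leans on surjectivity of $F_{k,n}$ and is close in spirit to the still-conjectural additivity of Conjecture \ref{conj:n-column_tab_k=2}), and it identifies \emph{which} split occurs. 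Two points need attention before your argument is complete. First, the crux --- the formula for $P_{\{a,b\}}(v_T)$ on the web matrix --- is only predicted, not computed; it must actually be carried out, and the correct outcome is that, modulo lineality, $\wt_T$ equals the split weight $J\mapsto\max(0,|J\cap[i+1,j]|-1)$ for the cyclic interval $[i+1,j]$ rather than $\{i,\dots,j-1\}$ (compare the vectors $w_{13},\dots,w_{35}$ for $\Delta(2,5)$ listed in the paper, each supported exactly on the pairs contained in $[i+1,j]$). Second, your parenthetical claim that every matroidal subdivision of $\Delta(2,n)$ is automatically positroidal is false (only those corresponding to planar trees are), but this is harmless since Theorem \ref{thm:main_thm} already supplies positroidality.
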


\begin{proof}

By Theorem \ref{thm:main_thm} we know that for a non-frozen prime tableaux $T \in \SSYT(2,[n])$ its image under the map $F_{k,n}$ induces a positroidal subdivision $\Sigma$ of $\Delta(2,n)$. We proceed via contradiction and assume that $\Sigma$ is not a split positroidal subdivision. Since we know that all matroidal (positroidal) subdivisions of $\Delta(2,n)$ are obtained as common refinements of split matroidal (positroidal) subdivisions \cite[Proposition 7.11]{herrmannsplitting}, therefore there exists at least two or more split subdivisions $S_{1}, \hdots , S_{j}, (j \geq 2)$ such that $\Sigma$ is the common refinement of the split subdivisions $S_{j}$'s. Let $w_{\Sigma}$ be the weight inducing the subdivision $\Sigma$, then by the Split decomposition theorem \cite[Theorem 3.10]{herrmannsplitting}, up to scalar factors 

\[ w_{\Sigma} = w_{1} + w_{2} + \hdots + w_{j}.\]

As is already noted in \cite{SW2005tropical} that the map $F_{k,n}$ is a bijection, therefore the surjectivity of this map implies that there should exist tableaux $T_{1}, \hdots,  T_{j}$ each of which is at least of one column such that  $T = \cup_{i=1}^{m} T_{i}$ and the image of each $T_{i}, 1 \leq i \leq m$ under the map $F_{k,n}$, is $S_{i}$. But as $T$ is a non-frozen prime tableau in $\SSYT(2,[n])$, we know by Lemma \ref{lem:prime_calssific_2_n} that such tableau are one column and therefore such a decomposition is not possible and this gives us a contradiction, hence $\Sigma$ should be a split positroidal subdivision of $\Delta(2,n)$. 
\end{proof}

Notice that in the proof of Theorem \ref{thm:split_2_n}  we use the fact that non-frozen prime tableaux in $\SSYT(2,[n])$ are one column tableaux. We believe that the following could also be true.
\begin{conjecture}\label{conj:n-column_tab_k=2}
For every $T \in \SSYT(2,[n])$, such that $T$ has $m$ columns and let $(S_{1}, \ldots , S_{m})$ be the unique
unordered $m$-tuple of one-column tableaux that are pairwise weakly separated whose union is $T$. Then the weight $\wt_{T}$ of the positroidal subdivision corresponding to $T$ is given as
\begin{align*}
\wt\nolimits_{T} = \wt\nolimits_{S_{1}} + \hdots + \wt\nolimits_{S_{k}}.
\end{align*} 
\end{conjecture}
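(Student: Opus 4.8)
\emph{Sketch of an approach.} The plan is to reduce the conjecture to the split--decomposition theory of $\Delta(2,n)$ through the piecewise-linear map $F_{2,n}$. The first step is to record that $v_T$ is additive under $\cup$: since $v_T$ only registers the multiplicities $c_{i,j}$ of the fundamental one-column tableaux $T_{i,j}$ in the (unique) small-gap representative of $T$, and $\cup$ descends to $\SSYT(k,[n],\sim)$ where it simply adds these multiplicities (cf.\ Section~6 of \cite{EL2024}), one obtains $v_{S_1\cup\cdots\cup S_m}=v_{S_1}+\cdots+v_{S_m}$. A frozen one-column factor with consecutive entries is trivial and contributes $v_{S_i}=0$, and the frozen factor $\scalemath{0.7}{\begin{ytableau} 1 \\ n \end{ytableau}}$ (not killed by $\sim$ when $k=2$) contributes a vector in the lineality space; either way it drops out of $\wt$. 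Hence
\begin{align*}
\wt\nolimits_T = F_{2,n}(v_T) = F_{2,n}(v_{S_1}+\cdots+v_{S_m}),
\end{align*}
and, because $F_{2,n}(v_{S_i})=\wt_{S_i}$ by definition, the entire statement reduces to proving that $F_{2,n}$ restricts to a \emph{linear} map on the cone $\pos(v_{S_1},\ldots,v_{S_m})$.

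To do this I would use that $F_{2,n}$ is a piecewise-linear bijection onto $\TropGr^{+}(2,n)=\Dr^{+}(2,n)$ (Theorem~\ref{thm:pos_trop_Grass_equals_pos_Dress}) whose domains of linearity are the pull-backs of the cones of this fan, and that for $k=2$ the poset of positroidal subdivisions of $\Delta(2,n)$ is exactly the poset of common refinements of compatible families of splits, i.e.\ $\Delta(2,n)$ is totally split-decomposable; this is \cite[Proposition 7.11]{herrmannsplitting} together with the split-decomposition theorem \cite[Theorem 3.10]{herrmannsplitting} (classically, the four-point/tree-metric condition for $\Gr(2,n)$). By Lemma~\ref{lem:prime_calssific_2_n} and Theorem~\ref{thm:split_2_n}, each non-frozen one-column factor $S_i=\scalemath{0.7}{\begin{ytableau} a_i \\ b_i \end{ytableau}}$ spans a ray $\RR_{\ge 0}v_{S_i}$ of $\Dr^{+}(2,n)$ corresponding to a split of $\Delta(2,n)$, which we may visualize as the diagonal $\{a_i,b_i\}$ of the convex $n$-gon on $1,\ldots,n$.

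Next I would translate the hypothesis: two splits of $\Delta(2,n)$ are compatible (their split hyperplanes do not meet in the interior of $\Delta(2,n)$) precisely when the corresponding diagonals of the $n$-gon do not cross, which is exactly weak separation of the $2$-subsets $\{a_i,b_i\}$. So the conjecture's hypothesis says exactly that the splits attached to the $S_i$ form a pairwise compatible --- hence, by the $n$-gon classification, simultaneously compatible --- family; such a family has a common refinement $\Sigma$, a positroidal subdivision of $\Delta(2,n)$, whose secondary cone in $\Dr^{+}(2,n)$ equals $\pos(v_{S_1},\ldots,v_{S_m})$ by the split-decomposition theorem and total split-decomposability. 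Since $F_{2,n}$ is linear on that closed cone and $v_T=\sum_i v_{S_i}$ lies inside it, we conclude
\begin{align*}
\wt\nolimits_T = F_{2,n}\!\left(\sum_i v_{S_i}\right) = \sum_i F_{2,n}(v_{S_i}) = \sum_i \wt\nolimits_{S_i},
\end{align*}
with coefficient exactly $1$ on each summand because $v_T=\sum_i v_{S_i}$ holds on the nose rather than up to positive scalars.

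The step I expect to be the main obstacle is the precise geometric claim in the previous paragraph: that the secondary cone of the common refinement $\Sigma$ is genuinely $\pos(v_{S_1},\ldots,v_{S_m})$ and that the Speyer--Williams parametrization does not bend across it, i.e.\ that a pairwise weakly separated family of $2$-subsets spans a face-cone of $\Dr^{+}(2,n)$ on which $F_{2,n}$ is honestly linear and agrees with the classical split decomposition, normalization constant included. For $k=2$ this is in substance the Bandelt--Dress total decomposability of tree metrics, but rephrasing it in the language of $F_{2,n}$ and checking the constant is the real content. A secondary, purely bookkeeping point is to confirm that the ``unique unordered $m$-tuple of pairwise weakly separated one-column tableaux whose union is $T$'' is well defined for every $T\in\SSYT(2,[n])$ --- which follows from $\CC[\Gr(2,n)]$ being a cluster algebra of finite type, so that $\ch(T)$ is a cluster monomial with a unique weakly separated support --- and to handle the frozen factors as above.
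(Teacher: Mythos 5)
First, a point of comparison: the paper does not prove this statement --- it is stated explicitly as Conjecture \ref{conj:n-column_tab_k=2} ("We believe that the following could also be true"), so there is no proof of record to measure your argument against. Your proposal should therefore be judged as an attempt to settle an open conjecture, and in that light it is a reasonable strategy but it contains a genuine gap at exactly the point where the mathematical content lies. The reduction you perform --- additivity of $v_{(\cdot)}$ under $\cup$, hence $v_T=\sum_i v_{S_i}$, hence the conjecture follows once $F_{2,n}$ is linear on $\pos(v_{S_1},\ldots,v_{S_m})$ --- is the natural one, and you are right that each tropical Pl\"ucker coordinate $P_J$ is a homogeneous piecewise-linear function, so exact (not merely modulo lineality) additivity would follow from linearity on that cone. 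But the justification you offer for the linearity is circular in effect: the split-decomposition theorem, the compatibility of non-crossing splits, and the total split-decomposability of $\Delta(2,n)$ all live on the image side (in $\Dr^{+}(2,n)$, i.e.\ in the space of weight vectors), and say nothing by themselves about where the bending loci of the parametrization $F_{2,n}$ sit inside the domain $\RR^{(k-1)(n-k)}$. Knowing that $\wt_{S_1},\ldots,\wt_{S_m}$ span a secondary cone of $\Dr^{+}(2,n)$ does not yet tell you that their preimages $v_{S_1},\ldots,v_{S_m}$ span a cone of the linearity fan of $F_{2,n}$, nor that $F_{2,n}$ maps the one cone onto the other without bending; a piecewise-linear bijection of fans need not match an arbitrarily chosen collection of rays upstairs to the corresponding collection downstairs. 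You flag this yourself as ``the main obstacle,'' which is honest, but it is not a technical loose end --- it is essentially the whole conjecture restated.

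To close the gap one would need two concrete verifications that your sketch does not supply: (i) that for a non-frozen one-column tableau $S$ with entries $\{a,b\}$ the vector $v_S$ (defined via multiplicities of fundamental tableaux, following \cite{EL2024}) is a generator of the ray of the Speyer--Williams fan $F_{2,n}$ labelled by the diagonal $ab$ of the $n$-gon, with the correct normalization; and (ii) that the cones of the linearity fan of $F_{2,n}$ are exactly the cones spanned by pairwise non-crossing (equivalently, pairwise weakly separated) such rays. Both claims are plausible and in the spirit of \cite{SW2005tropical} for $k=2$, but matching the tableau-theoretic coordinates $v_T$ to the web-matrix coordinates used there is precisely the computation that has to be done. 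Two smaller issues: the additivity $v_{S\cup T}=v_S+v_T$ requires checking that passing to the small-gap representative commutes with $\cup$ (removal of trivial factors could in principle interfere), and your treatment of frozen factors is muddled --- for the literal vector identity $\wt_T=\sum_i\wt_{S_i}$ claimed in the conjecture you do not want frozen contributions to ``drop out''; you need them to be handled by the same linearity statement, since $\wt_{S_i}$ for a frozen $S_i$ is generally a nonzero vector even though it induces the trivial subdivision. Finally, your justification of the well-definedness of the weakly separated $m$-tuple via cluster monomials is unnecessary; the paper points to \cite{early2024classification} for this.
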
 

We note that the existence of the unique
unordered $m$-tuple of one-column tableaux $(S_{1}, \ldots , S_{m})$ mentioned in Conjecture \ref{conj:n-column_tab_k=2} is shown in \cite[Lemma 3.1]{early2024classification}. We ask the question: for a collection of one-column tableaux $S_1, \ldots, S_m \in \SSYT(k,[n])$, if $S_1, \ldots, S_m$ are pairwise weakly separated, is $\wt(\cup_{i=1}^m S_i) = \sum_{i=1}^m \wt(S_i)$?

\begin{example}
We consider two non-frozen prime tableaux $T_{1} = \scalemath{0.6}{\begin{ytableau}
    3\\4\\7
\end{ytableau}}$ and $T_{2} = \scalemath{0.6}{\begin{ytableau}
    4\\6\\7
\end{ytableau}} \in \SSYT(3, [7])$. The weight vectors of $T_{1}$, $T_{2}$ (we use lexicographical ordering for $k$-subsets in $\binom{[n]}{k}$) are
\begin{align*}
& \wt\nolimits_{T_{1}} = (0, 0, 0, 0, 0, 0, 0, 0, 0, 0, 0, 0, 1, 1, 1, 0, 0, 0, 0, 0, 0, 0, 1, 1, 1, 0, 0, 0, 1, 1, 1, 1, 1, 1, 2), \\
& \wt\nolimits_{T_{2}}= (0, 0, 0, 0, 0, 0, 0, 0, 0, 0, 0, 0, 0, 0, 0, 0, 0, 0, 0, 0, 0, 0, 0, 0, 0, 0, 0, 0, 0, 0, 0, 0, 0, 0, 1).
\end{align*}  
Note that one-column tableaux $T_1$, $T_2$ are weakly separated. The weight vector of $T_1 \cup T_2$ is 
\begin{align*}
\wt\nolimits_{T_1 \cup T_2} & = (0, 0, 0, 0, 0, 0, 0, 0, 0, 0, 0, 0, 1, 1, 1, 0, 0, 0, 0, 0, 0, 0, 1, 1, 1, 0, 0, 0, 1, 1, 1, 1, 1, 1, 3) \\
& = \wt\nolimits_{T_{1}} + \wt\nolimits_{T_{2}}.
\end{align*}
\end{example}

\begin{example}
We consider the five non-frozen prime tableaux in $\SSYT(2,[5])$:
\begin{align*}
\scalemath{0.7}{
\begin{ytableau}
1  \\ 3
\end{ytableau}, \
\begin{ytableau}
1  \\ 4
\end{ytableau}, \
\begin{ytableau}
2 \\ 4
\end{ytableau}, \
\begin{ytableau}
2 \\ 5 
\end{ytableau}, \
\begin{ytableau}
3 \\ 5 
\end{ytableau}.}
\end{align*}

The image of these tableaux under the map $F_{k,n}$ provides weight vectors that induce split positroidal subdivisions and these weight vectors are given as follows:
\begin{align*}
w_{13} = \{0,0,0,0,1,0,0,0,0,0 \}, \\
w_{14} = \{0,0,0,0,1,1,0,1,0,0 \}, \\ 
w_{24} = \{0,0,0,0,0,0,0,1,0,0 \}, \\
w_{25} = \{0,0,0,0,0,0,0,1,1,1 \}, \\
w_{35} = \{0,0,0,0,0,0,0,0,0,1 \}.
\end{align*}

\begin{figure}
    \centering
    \includegraphics[scale=0.4]{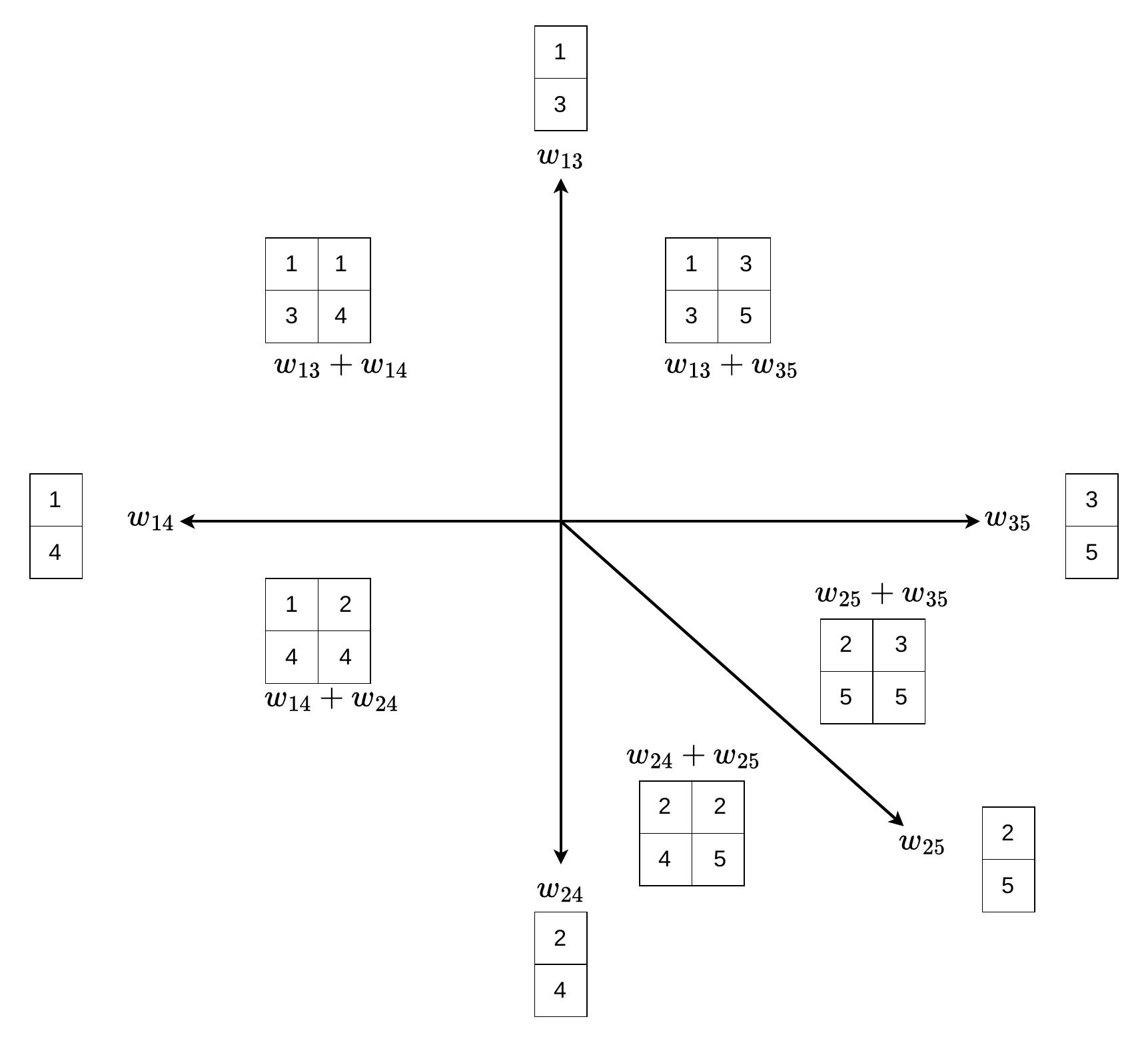}
    \caption{Positive Tropical Grassmannian $\TropGr(2,5)^{+}$ with the five rays and maximal cones described as common refinements of rays along with the corresponding non-frozen prime tableaux corresponding to each of the cones.}
    \label{fig:Pos_TropGr(2,5)}
\end{figure}

In Figure \ref{fig:Pos_TropGr(2,5)} the five rays of the positive tropical Grassmannian are shown along with the corresponding one column non-frozen prime tableaux that correspond to them. Also the maximal cones are listed along with the corresponding weight vectors, which as a consequence of being common refinements of splits, correspond to sum of weight vectors that induce the respective splits.

\end{example}

It is worth noting that maximal cells of a matroidal subdivision of $\Delta(2,n)$ can be described by the paths through the internal vertex of a phylogenetic tree on $n$ leaves \cite{kapranov16chow, speyer2004tropical}. Hence, we can restrict this finding to positroidal subdivisions as well, therefore split positroidal subdivisions correspond to phylogenetic trees with exactly one internal edge \cite[Proposition 13.1]{arkani2021positive}. We note then that the non frozen prime tableaux $T \in \SSYT(2,[n]), T = \scalemath{0.6}{ \begin{ytableau}
    i \\ j
\end{ytableau}}$, $i < j$ provides a canonical indexing of this phylogenetic tree illustrated in Figure \ref{fig:tree_label}. Therefore, the maximal cells of the split positroidal subdivision $\Sigma$ corresponding to $T = \{ij\}$ can be listed as paths in the following way:
\begin{align*}\label{eq:split_form}
& \Sigma = \{\left(\cup_{s=1}^{i} \{(s,t): t \in [i+1,j]   \}\right) \cup \left( \cup_{s=i+1}^{j}\{ (s,t): t \in [s+1,n] \} \right), \\
& \left(\cup_{s=1}^{i} \{(s,t): t \in [s+1,n] \}\right) \cup \left( \cup_{s=i+1}^{n}\{(s,t): t \in [j+1, n], s<t \} \right)\}.
\end{align*} 

\begin{figure}
    \centering
    \includegraphics[scale=0.55]{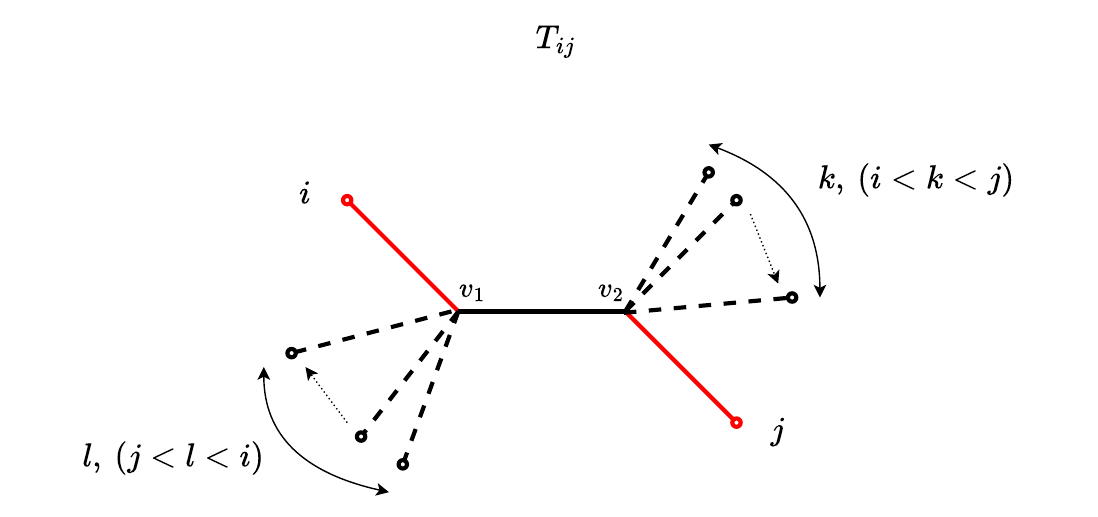}
    \caption{A phylogenetic tree corresponding to a split positroidal subdivision 
    $\Sigma$ of $\Delta(2,n)$ where the leaves $i$ and $j$ are the elements in the non-frozen prime tableaux $T = \{ ij\}$ that corresponds to $\Sigma$.}
    \label{fig:tree_label}
\end{figure}

\begin{figure}
    \centering
    \includegraphics[scale=0.5]{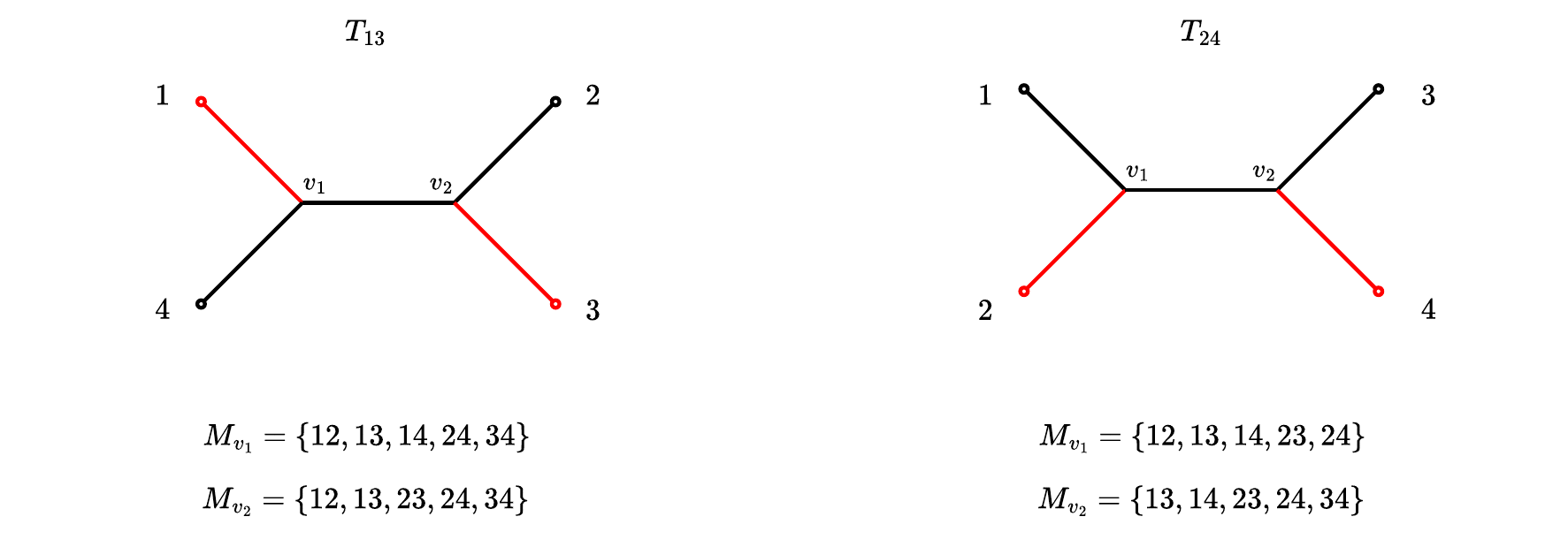}
    \caption{Phylogenetic trees that correspond to the two split positroidal subdivisions of $\Delta(2,4)$ where the maximal cells of the subdivision are indexed by the internal vertices $v_{1}$ and $v_{2}$ and the maximal cells can be described as paths passing through each internal vertex.  }
    \label{fig:split_tree_24}
\end{figure}

\begin{example}
In Figure \ref{fig:split_tree_24} and Figure \ref{fig:split_tree_25} we illustrate the two split positroidal subdivisions of $\Delta(2,4)$ and five split positroidal subdivisions of $\Delta(2,5)$ respectively along with the corresponding phylogenetic trees with the elements $i$ and $j$ that correspond to the entries in the non-frozen prime tableaux that provide a labelling of the leaves in the trees.      

\begin{figure}
    \centering
    \includegraphics[scale=0.45]{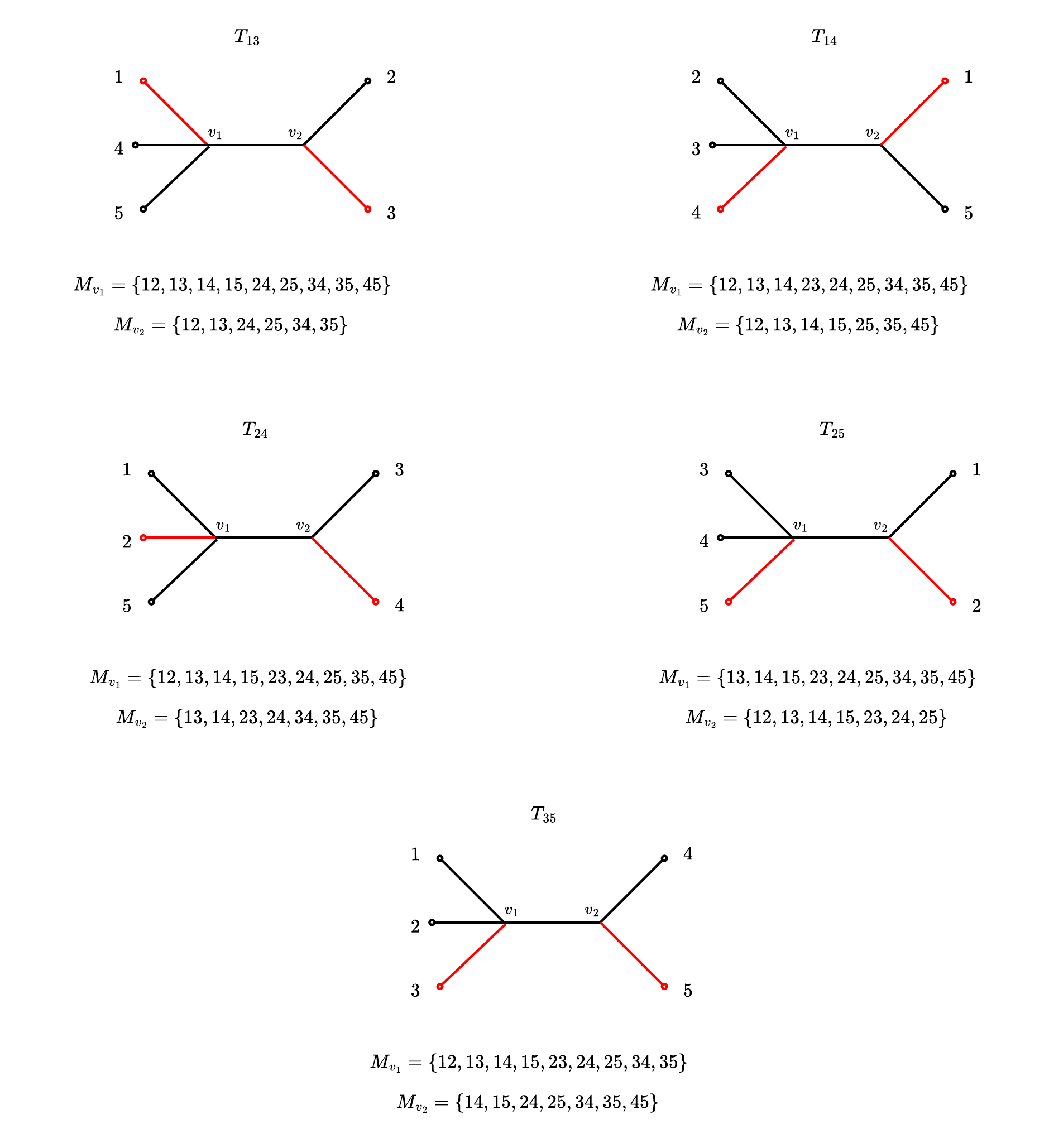}
    \caption{Phylogenetic trees that correspond to the five split positroidal subdivisions of $\Delta(2,5)$ where the maximal cells of the subdivision are indexed by the internal vertices $v_{1}$ and $v_{2}$ and the maximal cells can be described as paths passing through each internal vertex.  }
    \label{fig:split_tree_25}
\end{figure}

\end{example}

\subsection{\texorpdfstring{Split positroidal subdivisions for $\Delta(k,n)$ for $k \ge 2$}{Split positroidal subdivisions for Delta(k,n) for k >= 2}}

\begin{definition}
We say that a one-column tableau in $\SSYT(k,[n])$ has cyclical one gap if its entries are either of the form $[i_1, i_2] \cup [i_3, i_4]$ for some $i_1 \le i_2 <i_3-1$, $i_3 \le i_4$, $i_2-i_1 + i_4-i_3= k-2$, or of the form $[1,i_1]\cup [i_2, i_3] \cup [i_4,n]$ for some $1 \le i_1 < i_2-1$, $i_2 \le i_3 < i_4-1$, $i_4 \le n$.  
\end{definition}

\begin{conjecture} \label{conjecture:number of split positroidal subdivision}
Let $T \in \SSYT(k, [n])$, $k \geq 2$, be a non-frozen prime tableau. Then $F_{k,n}(v_T)$ is split if and only if $T$ is a one-column tableau and $T$ has one cyclical gap. For $k \ge 2$, the number of split positroidal subdivision of $\Delta(k,n)$ is $\frac{k-1}{2} n(n - k - 1)$. 
\end{conjecture}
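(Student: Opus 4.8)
The plan is to prove the characterization in the first sentence and then deduce the enumeration as an elementary corollary. Throughout I identify a non-frozen prime tableau $T$ with the positive tropical Plücker vector $\wt_T = F_{k,n}(v_T)$ and with the positroidal subdivision it induces, which is legitimate by Theorem~\ref{thm:main_thm} and the argument in the proof of Theorem~\ref{thm:split_2_n}.

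For the ``if'' direction, let $T$ be a one-column non-frozen prime tableau whose entry set $E \subseteq [n]$, read cyclically, is a disjoint union of exactly two arcs. Write $[n] = I_1 \sqcup G_1 \sqcup I_2 \sqcup G_2$ cyclically, with $E = I_1 \sqcup I_2$ and $G_1, G_2$ the two nonempty gaps, and set $A = G_1 \cup I_2$ and $\mu = |I_2|$. A short check gives $1 \le \mu \le k-1$ and $\mu + 1 \le |A| \le n - k + \mu - 1$, so the hyperplane $\{x : \sum_{i \in A} x_i = \mu\}$ meets the interior of $\Delta(k,n)$ and cuts it into the two positroid polytopes $\Delta(k,n) \cap \{\sum_{i\in A} x_i \le \mu\}$ and $\Delta(k,n) \cap \{\sum_{i\in A} x_i \ge \mu\}$ --- a split, induced by the positive tropical Plücker vector $g_{A,\mu} : S \mapsto \max\{0,\, |S \cap A| - \mu\}$. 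The substantive point is then to evaluate the tropicalized Plücker coordinates $P_J$ of the Speyer--Williams web matrix at $v_T$ and to verify $\wt_T \equiv g_{A,\mu}$ modulo the lineality space; this is a direct if lengthy computation, and for $k = 2$ --- where $T$ has column $\{i,j\}$, $i<j$, and one finds $A = \{i+1,\ldots,j\}$, $\mu = 1$ --- it reproduces Theorem~\ref{thm:split_2_n} and the weight vectors $w_{13}, w_{14}, \ldots$ of the $\Gr(2,5)$ example in Section~\ref{sec:Split positroidal subdivisions}. Hence $F_{k,n}(v_T)$ is precisely the split along that hyperplane, and distinct such $T$ give distinct splits.

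For the ``only if'' direction, suppose $T$ is non-frozen prime and $F_{k,n}(v_T)$ is a split. If $T$ is one column with entry set forming $c \ge 3$ cyclic arcs, a variant of the same computation shows $\wt_T$ is a positive combination of the $c-1$ weights $g_{A_r,\mu_r}$ attached to the cyclic intervals given by the partial sums of the arc lengths; these are $c - 1 \ge 2$ distinct splits, pairwise compatible since any two splits of $\Delta(k,n)$ are (\cite[Corollary~5.6]{herrmannsplitting}), so their common refinement --- which $F_{k,n}(v_T)$ refines --- has $c \ge 3$ maximal cells, a contradiction. If $T$ has $m \ge 2$ columns and admits a decomposition $T = S_1 \cup \cdots \cup S_m$ into pairwise weakly separated one-column factors (automatic when $k = 2$, \cite[Lemma~3.1]{early2024classification}), then no $S_i$ is frozen (as $T$ has no frozen factor) and, $T$ being prime, the $S_i$ are pairwise distinct; granting the additivity $\wt_T = \sum_i \wt_{S_i}$ raised after Conjecture~\ref{conj:n-column_tab_k=2}, $F_{k,n}(v_T)$ is the common refinement of the $F_{k,n}(v_{S_i})$, which --- whether some $S_i$ has three or more arcs or all $S_i$ are two-arc one-column tableaux inducing distinct splits --- has at least three cells, again a contradiction. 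A multi-column prime $T$ with no such decomposition (these occur for $k \ge 3$, e.g. the degree-two cluster variables of $\Gr(3,6)$) would have to be dispatched by computing $\wt_T$ directly from the web matrix and exhibiting a third cell. For $k = 2$ the last two cases do not arise --- the second by Lemma~\ref{lem:prime_calssific_2_n} --- so this recovers Theorem~\ref{thm:split_2_n}. This proves the characterization.

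It remains to count. By the characterization the split positroidal subdivisions of $\Delta(k,n)$ are in bijection with the $k$-subsets of $[n]$ that decompose into exactly two cyclic arcs; the number of $k$-subsets of $[n]$ decomposing into exactly $c$ cyclic arcs equals $\tfrac{n}{c}\binom{k-1}{c-1}\binom{n-k-1}{c-1}$ (choose a distinguished arc-start, a composition of $k$ into $c$ positive arc-lengths and a composition of $n-k$ into $c$ positive gap-lengths, then divide by the $c$ choices of distinguished arc), so $c = 2$ yields $\tfrac{n}{2}(k-1)(n-k-1) = \tfrac{k-1}{2} n(n-k-1)$, as claimed. (One could instead avoid tableaux: Herrmann--Joswig classify the splits of $\Delta(k,n)$ by pairs $(A,\mu)$, one proves the key lemma that such a split is positroidal precisely when $A$ is a cyclic interval --- its lower cell being then a Schubert-type positroid --- and counts the admissible pairs modulo $(A,\mu) \leftrightarrow ([n]\setminus A,\, k-\mu)$.) The enumeration is thus routine once the characterization is established; the real difficulty is the ``only if'' direction, and within it the multi-column case, which relies on the additivity $\wt_{\cup_i S_i} = \sum_i \wt_{S_i}$ for pairwise weakly separated one-column tableaux --- equivalently, on the compatibility of the web-matrix tropicalization with products of weakly separated minors --- an identity open for $k \ge 3$, and on controlling the multi-column primes admitting no weakly separated decomposition. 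Establishing this additivity, presumably from the structure of the Speyer--Williams parametrization on products of weakly separated Plücker coordinates, is where I expect the bulk of the work to lie; the one-column multi-arc case and the identification $\wt_T \equiv g_{A,\mu}$, though computational, should present no conceptual obstacle once the web matrix is written out explicitly.
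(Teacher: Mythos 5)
First, a point of comparison that matters: this statement is a \emph{conjecture} in the paper. The authors supply no proof, only computational verification for a handful of cases ($\Delta(3,7)$ through $\Delta(5,11)$) via Sagemath and \texttt{polymake}, so there is no argument of theirs to measure yours against. Your proposal must therefore stand on its own, and, as you candidly admit in your closing paragraph, it is a programme rather than a proof. The architecture --- characterize the split tableaux, then count --- is sensible, and the final count is correct: the number of $k$-subsets of $[n]$ consisting of exactly two cyclic arcs is $\tfrac{n}{2}\binom{k-1}{1}\binom{n-k-1}{1}=\tfrac{k-1}{2}n(n-k-1)$, which matches the conjectured formula (and specializes correctly to the five splits of $\Delta(2,5)$).

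The gaps, however, are each substantive. (i) The central identification $\wt_T\equiv g_{A,\mu}$ modulo the lineality space is asserted as ``a direct if lengthy computation'' but never carried out; for $k\ge 3$ this is precisely the content that must be supplied, and checking consistency with Theorem~\ref{thm:split_2_n} and the $\Gr(2,5)$ weight vectors does not certify your general formula for $A$ and $\mu$. (ii) The multi-column branch of the ``only if'' direction rests on the additivity $\wt_{\cup_i S_i}=\sum_i\wt_{S_i}$ for pairwise weakly separated one-column factors, which the paper itself poses only as an open question following Conjecture~\ref{conj:n-column_tab_k=2}; you cannot ``grant'' a second open conjecture to prove the first. (iii) Multi-column primes admitting no weakly separated one-column decomposition exist for every $k\ge 3$ and are left entirely untreated --- you acknowledge they ``would have to be dispatched,'' which is to say they are not. (iv) The enumeration needs surjectivity: every split positroidal subdivision of $\Delta(k,n)$ must arise as $F_{k,n}(v_T)$ for some two-arc one-column prime $T$. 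The conjecture's first sentence, even if proved, only characterizes which $T$ give splits; your alternative route via the Herrmann--Joswig classification of splits by pairs $(A,\mu)$ and the ``key lemma'' that such a split is positroidal iff $A$ is a cyclic interval is the right idea for closing this, but that lemma is also only asserted. In short: the skeleton is plausible and the counting step is fine, but every load-bearing step of the characterization is either an uncompleted computation or an appeal to an open statement, so this does not settle the conjecture.
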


We have verified the Conjecture \ref{conjecture:number of split positroidal subdivision} for $\Delta(3,7)$, $\Delta(3,8)$, $\Delta(3,9)$, $\Delta(4,8)$, $\Delta(4,9)$, $\Delta(4,10)$, $\Delta(4,11)$, $\Delta(5,10)$ and $\Delta(5,11)$ by using Sagemath \cite{sagemath} and  \texttt{polymake} \cite{gawrilow2000polymake}. All the files containing the code used for all these computations can be found at the following link

\begin{center}
    \url{https://github.com/Ayush-Tewari13/Positroidal_subdiv}
\end{center}

\begin{remark}
The one-column tableaux $[i_1, i_2] \cup [i_3, i_4]$ for some $i_1 \le i_2 <i_3-1$, $i_3 \le i_4$, $i_2-i_1 + i_4-i_3= k-2$, correspond to Kirillov-Reshetikhin modules of the quantum affine algebra $U_q(\widehat{\mathfrak{sl}_k})$.
\end{remark}

\bibliographystyle{siam}
\bibliography{biblio.bib}

\end{document}